\setlist{noitemsep}
\newcommand{\Bo}{\tn{Bo}}
\newcommand{\F}{\mathcal{F}}
\newcommand{\G}{\mathcal{G}}
\newcommand{\lra}{\longrightarrow}
\newcommand{\mc}{\mathcal}
\newcommand{\N}{\mathbb{N}}
\newcommand{\OO}{\mathcal{O}}
\renewcommand{\P}{\mathbb{P}}
\newcommand{\ra}{\rightarrow}
\newcommand{\stlra}[1]{\stackrel{#1}{\longrightarrow}}
\newcommand{\tn}{\mathrm}
\newcommand{\Z}{\mathbb{Z}}
\DeclareMathOperator{\cofib}{cofib}
\DeclareMathOperator*{\colim}{colim}
\DeclareMathOperator{\Cpl}{Cpl}
\DeclareMathOperator{\GL}{GL}
\DeclareMathOperator{\GW}{GW}
\DeclareMathOperator{\K}{K}
\DeclareMathOperator*{\limone}{lim^1} 
\DeclareMathOperator{\Perf}{Perf}
\DeclareMathOperator{\sheafhom}{\mathscr{H}\textit{\kern -4pt om}\,}
\DeclareMathOperator{\SO}{SO}
\DeclareMathOperator{\Spec}{Spec}
\DeclareMathOperator{\sPerf}{sPerf}
\DeclareMathOperator{\Vect}{Vect}
\DeclareMathOperator{\W}{W}
\numberwithin{equation}{subsection}
\theoremstyle{definition}
\newtheorem{definition}[equation]{Definition}
\theoremstyle{plain}
\newtheorem{corollary}[equation]{Corollary}
\newtheorem{lemma}[equation]{Lemma}
\newtheorem{proposition}[equation]{Proposition}
\newtheorem{theorem}[equation]{Theorem}
\newtheorem{maintheorem}{Theorem}
\title{\textsc{on atiyah-segal completion for t-equivariant hermitian k-theory}}
\author{
    \textsc{herman rohrbach}
    \thanks{
        The author was partially supported by the research training group \emph{GRK 2240: Algebro-Geometric Methods in Algebra, Arithmetic and Topology} and by the ERC through the project QUADAG.
        This paper is part of a project that has received funding from the European Research Council (ERC) under the European Union's Horizon 2020 research and innovation programme (grant agreement No. 832833). \newline
	\includegraphics[scale=0.08]{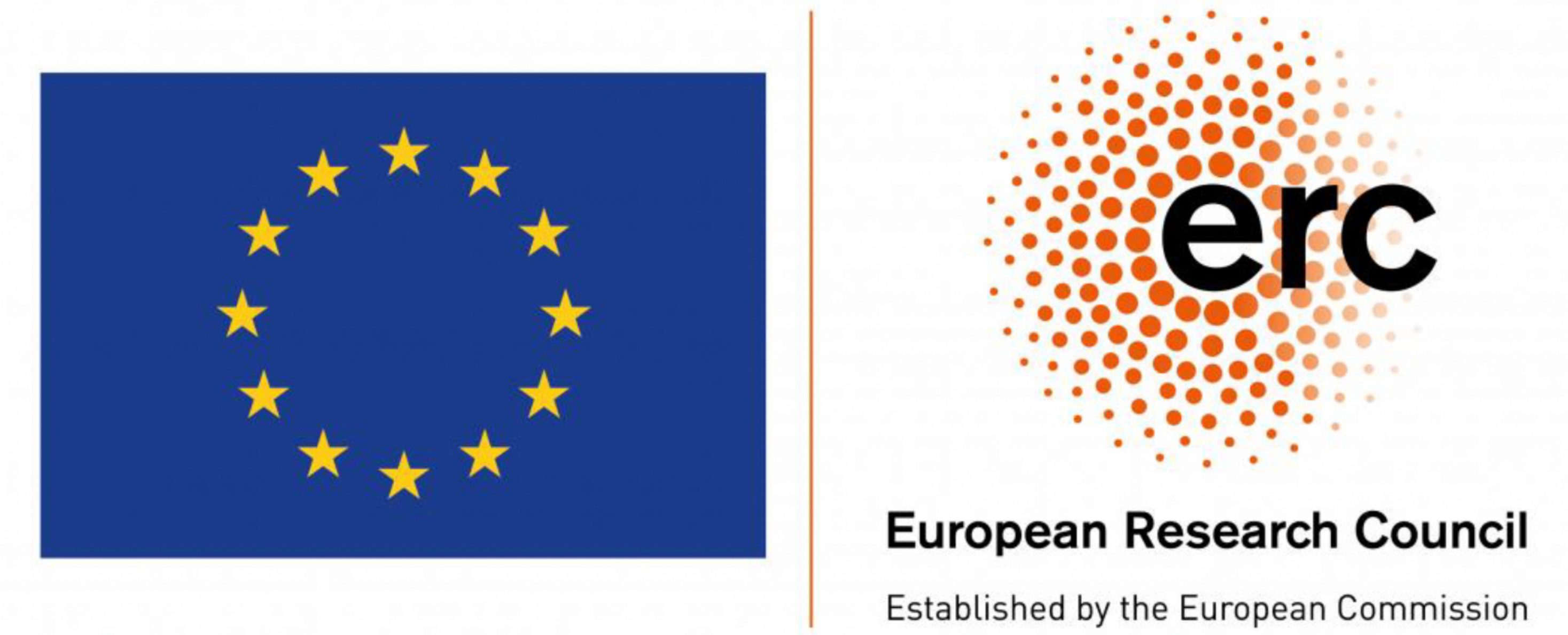}
    }
}
\date{}
\begin{document}

\maketitle

\begin{abstract}
    We show how derived completion can be used to prove an analogue of Atiyah-Segal completion for the $T$-equivariant Hermitian K-theory of a scheme $X$ with a trivial $T$-action, containing $\tfrac{1}{2}$ and satisfying the resolution property, where $T$ is a split torus of rank $t$. 
    This result is an important first step towards a more general Atiyah-Segal completion theorem for Hermitian K-theory.
\end{abstract}

\section{Introduction}
    \label{section:introduction}

It is a consequence of completion theorems in the style of Atiyah-Segal \cite{atiyah69} that completions of certain algebraic objects correspond to certain geometric constructions, known as \emph{Borel constructions}, which generalize classifying spaces.
The question whether Atiyah-Segal completion holds for Hermitian $\K$-theory is an important one in the development of this invariant. 
In \cite{krishna18equi}, a completion theorem for algebraic K-theory is proved.
In \cite{tabuada21}, a motivic Atiyah-Segal completion theorem is proved for all localizing invariants, of which algebraic $\K$-theory is the universal one by \cite{blumberg13}. 
It is shown in \cite{calmes20hermitian2} that Hermitian $\K$-theory is the universal \emph{quadratic} localizing invariant. 
This quadratic nature introduces some obstacles when trying to prove Atiyah-Segal completion, the first of which are overcome in this article.
Notably, the \emph{Hermitian augmentation ideal} of definition \ref{def:hermitianaugmentationideal} is more complicated than the $\K$-theoretic augmentation ideal used to complete algebraic $\K$-theory.
The results here generalize some of those of the author's thesis \cite[section 8]{rohrbach21}.
In forthcoming work, we hope to prove a more general Atiyah-Segal completion theorem for Hermitian $\K$-theory.

Completion theorems are a way of testing the consistency of equivariant cohomology theories.
The Borel construction involved in the completion theorem provides a feasible way of computing equivariant cohomology, at least up to completion. 
Furthermore, the study of the completion theorem yields valuable insights into the behavior of the equivariant theory and the nature of the fundamental ideal.

We work over a base field $k$ of characteristic not $2$, and we let $X$ be a scheme over $k$ with the resolution property. 
We let $T$ be a split torus of rank $t$ over $k$ acting trivially on $X$.
Our main result is the following, see theorem \ref{theorem:ascompletionfortrivialactionofsplittorus}; the notation is explained in the course of this paper.

\begin{maintheorem} \label{maintheorem:ascompletionforsplittorus}
The natural map
\begin{equation*}
    \pi_{i} \GW^{T,[n]}(X)^{\wedge}_{IO} \lra \pi_i \GW^{[n]}_{\Bo}(X)
\end{equation*}
is an isomorphism for all $i, n \in \Z$.
\end{maintheorem}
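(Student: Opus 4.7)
The plan is to exploit the triviality of the $T$-action on $X$. Just as in the algebraic setting one has a decomposition $\K^T(X) \simeq \K(X) \otimes_{\Z} R(T)$, for Hermitian K-theory one expects a similar tensor-type decomposition of $\GW^{T,[n]}(X)$ relating it to $\GW^{[n]}(X)$ and the genuine $T$-equivariant Grothendieck-Witt theory of the base field. Since derived completion commutes with the $\GW^{[n]}(X)$-factor, this reduces the problem essentially to analyzing the derived $IO$-adic completion of $\GW^T(k)$ and comparing it to the Borel construction on the right-hand side.

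On the Borel side, the classifying space $BT$ is modeled by the algebraic approximation $\colim (\P^N)^t$, so $\GW^{[n]}_{\Bo}(X)$ is computed as an inverse limit of $\GW^{[n]}(X \times (\P^N)^t)$. This limit is accessible via the projective bundle formula for Hermitian K-theory, which holds under our assumptions that $\tfrac{1}{2} \in \Gamma(X, \OO_X)$ and that $X$ has the resolution property. A Milnor $\lim^1$ exact sequence then controls the passage from each finite stage to the homotopy groups of the limit.

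Next, I would factor $T = \GG_m^t$ and induct on the rank $t$. For $t = 1$, the ideal $IO \subset \pi_0 \GW^{\GG_m}(k)$ admits explicit generators coming from definition \ref{def:hermitianaugmentationideal}. Unlike the K-theoretic case, where the augmentation ideal in $R(\GG_m) = \Z[t^{\pm 1}]$ is the principal ideal $(t-1)$, the Hermitian augmentation ideal is generated by more symmetric classes that reflect the duality $t \mapsto t^{-1}$ and interact nontrivially with the shift $[n]$. With explicit generators in hand, the derived completion can be computed via a Koszul / telescope fiber sequence, and I would match it directly with the $(\P^N)$-tower. The induction $t \mapsto t+1$ then proceeds by iterated derived completion, using the associativity of derived completion at sequences of elements.

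The main obstacle is the careful identification of $IO$ and the proof that derived $IO$-adic completion agrees with the Borel limit. The Hermitian augmentation ideal is considerably more complicated than its K-theoretic analogue --- this is precisely the \emph{quadratic nature} obstacle highlighted in the introduction --- so tracking the Koszul computation, the $[n]$-shifts, and the hyperbolic contributions simultaneously, and matching them with the projective-bundle decomposition on the Borel side, is where the real work lies. Vanishing of the $\lim^1$ terms, necessary to conclude on all homotopy groups, should follow from finiteness properties built into the representation ring of the split torus.
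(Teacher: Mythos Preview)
Your plan has a genuine gap at its first step: the ``tensor-type decomposition'' you expect does not exist. For a trivial $T$-action one does \emph{not} get $\GW^{T,[n]}(X) \simeq \GW^{[n]}(X) \otimes \GW^T(k)$. What actually holds (corollary \ref{corollary:gwtheorysplittorusequivariant}) is
\[
\GW^{T,[n]}_i(X) \;\cong\; \GW^{[n]}_i(X) \;\oplus\; \bigoplus_{\lambda \in C} \K_i(X),
\]
where the nonzero-weight summands contribute algebraic $\K$-theory of $X$, not Hermitian $\K$-theory --- the duality pairs weight $\lambda$ with $-\lambda$ and each such pair is hyperbolic. So you cannot ``reduce to analyzing the derived $IO$-adic completion of $\GW^T(k)$'' with $\GW^{[n]}(X)$ coming along for the ride; the $\K(X)$-part is genuinely present and its $IO$-completion is exactly what links the problem to the $\K$-theoretic Atiyah--Segal theorem. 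Moreover, multiplication by the generators of $IO$ (lemma \ref{lemma:hermitianidealgenerators}) mixes the summands, so the decomposition is not obviously compatible with derived completion. Your proposed induction on the torus rank $t$ and a direct Koszul-vs-projective-bundle matching would therefore have to redo, shift by shift, the bookkeeping that the $\K$-theory result of \cite{krishna18equi,tabuada21} already encodes.

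The paper's argument avoids this entirely by inducting on $i$, not on $t$, via \emph{Karoubi induction}. The Bott fibre sequence $\GW^{[n]} \to \K \to \GW^{[n+1]}$ is natural in the equivariant setting and passes to both $IO$-completions and the Borel tower, yielding a ladder of long exact sequences. Two of every three vertical maps are controlled: the $\K$-theory maps are isomorphisms by the derived filtration lemma \ref{lemma:derivedfiltrationlemma} (showing $IO$- and $I$-adic completions of $\K^T(X)$ agree) together with the known $\K$-theoretic completion theorem. The base case $i \le -2$ (lemma \ref{lemma:ascompletionforwittgroups}) is handled directly because both sides reduce to Witt groups $\W^{[n-i]}(X)$, on which the hyperbolic generators of $IO$ act by zero, so completion is the identity; the Mittag--Leffler input (proposition \ref{proposition:admissiblegadgetsplittorusmittagleffler}) and nilpotence (proposition \ref{proposition:nilpotentclasses}) kill the $\limone$ and establish $IO$-completeness on the Borel side. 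A four-lemma argument then propagates the isomorphism to all $i$.
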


The proof hinges on similar results for algebraic $\K$-theory, which are proved in \cite{krishna18equi} and \cite{tabuada21}.
Then, using a common technique for Hermitian $\K$-theory called \emph{Karoubi induction}, these results are extended to Hermitian $\K$-theory.
To do so, a careful examination of the \emph{Hermitian augmentation ideal} is required, as well as the implementation of \emph{derived completion} as set forth in \cite{lurie18}. 

Two obvious next steps would be to prove the Atiyah-Segal completion theorem for trivial actions of more general algebraic groups, starting with connected split reductive groups, and to prove it for non-trivial actions of a split torus, using the idea of \emph{filtrable schemes}.

We expect that Hermitian $\K$-theory does not satisfy Atiyah-Segal completion for either nonreductive groups or singular schemes with an action of a reductive group \cite[section 1.3]{krishna18equi}, but that it does hold for connected split reductive groups acting on a smooth projective $k$-scheme, as long as the scheme is filtrable by a maximal torus of the group.
We use the framework of dg categories as described in \cite{schlichting17} for the definition of the Grothendieck-Witt spectrum and general results such as additivity and localization.

\subsection*{Acknowledgements}
    \label{subsection:acknowledgements}
    
I am grateful to Jens Hornbostel, Marc Levine and Vasudevan Srinivas for helpful discussions and suggestions.

\section{Equivariant Hermitian K-theory}
    \label{section:equivarianthermitianktheory}
    
In this section, we study the $T$-equivariant Hermitian $\K$-theory of schemes, where $T$ is a split torus, after giving a definition for equivariant Hermitian $\K$-theory for a more general class of group schemes.
The main result is lemma \ref{lemma:hermitianidealgenerators}, which gives an explicit finite set of generators of the Hermitian augmentation ideal.
In the next section, these generators are used to study the derived completion in the sense of \cite[section 7.3]{lurie18} of the $T$-equivariant Grothendieck-Witt spectrum of a scheme $X$ with a trivial $T$-action.

\subsection{Equivariance for split tori}
    \label{subsection:equivarianceforsplittori}
    
Let $G$ be a finitely presented, separated, faithfully flat group scheme over a separated noetherian base scheme $S$.
We let $X$ be a scheme with a $G$-action $\theta: G \times X \ra X$ such that $\tfrac{1}{2} \in \OO_X(X)$ and $X$ satisfies the resolution property.
Let $\sPerf^G(X)$ be the pretriangulated dg category of bounded complexes of $G$-equivariant finite locally free $\OO_X$-modules.
Given a line bundle $\mc{L}$ on $X$, $\sPerf^G(X)^{[\mc{L}]}$ denotes the dg category with duality induced by $\mc{L}$. 
If $G$ is trivial, we omit it from the notation.
The central object of study in this section is given by the following definition.

\begin{definition} \label{definition:equivarianthermitianktheory}
Let $\mc{L}$ be a line bundle on $X$.
For $n \in \Z$, let
\begin{equation*}
    \GW^{G,[n]}(X, \mc{L}) = \GW^{[n]}(\sPerf^G(X)^{[\mc{L}]})
\end{equation*}
be the \emph{$G$-equivariant Grothendieck-Witt spectrum of $X$}.
\end{definition}

It is possible to extend the above definition to sheaves of groups on the category of schemes with an appropriate topology, but since we are interested in algebraic groups the definition is general enough.

In fact, we concern ourselves only with the case where $G$ is a split torus.
Representations of split tori over a field $k$ correspond to multi-graded vector spaces.
This phenomenon is not limited to representations, but extends to equivariant sheaves when the base scheme is not the spectrum of some field $k$ anymore. 
This significantly simplifies computations of equivariant cohomology, which is why results are often first proved for split tori, cf. \cite{atiyah69} and \cite{krishna18equi}. 

Fix a base scheme $S$ and let $T$ be a split torus of rank $t$ over $S$.
Let $X$ be a scheme over $S$ with the resolution property and equip it with the trivial $T$-action.
Since $T$-equivariant sheaves of $\OO_X$-modules correspond to $\Z^t$-graded sheaves of $\OO_X$-modules (cf. \cite[proposition 1.1.17]{conrad14}),
a finite locally free $\OO_X$-module $\F$ equipped with a $T$-equivariant structure decomposes as a direct sum
\begin{equation*}
    \F = \bigoplus_{\lambda \in \Z^t} \F_{\lambda}
\end{equation*}
of finite locally free $\OO_X$-modules $\F_{\lambda}$, where all but finitely many of the $\F_{\lambda}$ are zero.
For such $\F$, denote by $W_{\F} \subset \Z^t$ the subset of all $\lambda \in \Z^t$ such that $\F_{\lambda} \neq 0$.
These $\lambda$ are called the \emph{weights of $\F$}.
Note that the trivial action induces the trivial $\Z^t$-grading on $\OO_X$, so that it is concentrated in multi-degree $(0,\dots,0) \in \Z^t$.

We write $\Vect^T(X)$ for the exact category of $T$-equivariant finite locally free $\OO_X$-modules, and $\mc{A} = \sPerf^T(X)^{[\OO_X]}$ for the corresponding dg category with duality of perfect complexes of $T$-equivariant finite locally free $\OO_X$-modules.
If $S = \Spec k$ for some field $k$, then a $T$-equivariant finite locally free sheaf on $S$ is a representation of $T$, which is equivalent to $\Z^t$-graded vector space.

The following proposition gives a semi-orthogonal decomposition of $\mc{A}$ to facilitate computations of its K-theory and GW-theory.

\begin{proposition} \label{proposition:orthogonaldecompositionsplittoriequivariantperfectcomplexes}
For $\lambda \in \Z^t$, let $\mc{A}_{\lambda}$ be the pretriangulated dg subcategory of $\mc{A}$ consisting of perfect complexes of $T$-equivariant locally free $\OO_X$-modules concentrated in multi-degree $\lambda$.
The following statements hold:
\begin{enumerate}[label=(\roman*)]
    \item for each $\lambda \in \Z^t$, $\mc{A}_{\lambda}$ is equivalent to $\sPerf(X)$ as a dg category (without duality);
    \item for $\mu, \lambda \in \Z^t$ such that $\mu \neq \lambda$, $M \in \mc{A}_{\mu}$ and $L \in \mc{A}_{\lambda}$,
        \begin{equation*}
            [M,L]^T = 0 \qquad \text{and} \qquad [L,M]^T = 0,
        \end{equation*}    
        where $[-,-]^T$ is the internal mapping complex of $\mc{A}$; and
    \item there is a semi-orthogonal decomposition $\mc{A} = \langle \mc{A}_{\lambda} \mid \lambda \in \Z^t \rangle$.
\end{enumerate}
\end{proposition}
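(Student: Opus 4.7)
The plan is to reduce every part to the equivalence between $T$-equivariant finite locally free $\OO_X$-modules and $\Z^t$-graded finite locally free $\OO_X$-modules already recalled before the proposition. The central observation I would use is that this equivalence is compatible with internal Hom: if $\F = \bigoplus_\mu \F_\mu$ and $\G = \bigoplus_\nu \G_\nu$ are $T$-equivariant with weight decompositions, then $\sheafhom(\F, \G)$ inherits a $\Z^t$-grading whose weight-$\lambda$ summand equals $\bigoplus_{\mu} \sheafhom(\F_\mu, \G_{\mu+\lambda})$, and the subsheaf of $T$-equivariant morphisms is exactly the weight-zero part. After totalising, the same identity holds at the level of the internal mapping complex $[-,-]^T$ of $\mc{A}$, and this is the key formula driving the whole proposition.

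For (i), I would construct a dg functor $\iota_\lambda \colon \sPerf(X) \to \mc{A}_\lambda$ that places a given perfect complex in multi-degree $\lambda$ (i.e.\ equips it with the $T$-action via the character $\lambda$), and verify it is a dg inverse to the obvious functor forgetting the grading. Compatibility with the dg structure is immediate from the internal-Hom formula above: for two objects both supported in weight $\lambda$, the weight-zero component of the Hom complex is canonically the non-equivariant $\sheafhom$ of the underlying complexes.

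For (ii), applying the same formula to $M \in \mc{A}_\mu$ and $L \in \mc{A}_\lambda$ shows that $\sheafhom(M, L)$ is concentrated in weight $\lambda - \mu$, so its weight-zero part vanishes whenever $\mu \neq \lambda$, yielding $[M, L]^T = 0$; swapping the roles of $\mu$ and $\lambda$ gives the symmetric statement. Part (iii) then follows from the canonical finite decomposition $\F = \bigoplus_{\lambda \in W_\F} \F_\lambda$ of any $T$-equivariant perfect complex into its graded pieces (each sitting in the corresponding $\mc{A}_\lambda$), combined with (ii), which ensures that the $\mc{A}_\lambda$ are pairwise orthogonal. Any choice of total order on $\Z^t$ then promotes this to a semi-orthogonal decomposition.

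The only real technical point I expect to navigate is checking that the weight-decomposition is genuinely functorial at the level of the pretriangulated dg category (i.e.\ survives passing from strict complexes to their Hom complexes with the correct dg structure) and that the phrase \emph{semi-orthogonal decomposition} indexed by the infinite set $\Z^t$ is interpreted correctly, in the sense that the family $\{\mc{A}_\lambda\}_{\lambda \in \Z^t}$ generates $\mc{A}$ even though each object hits only finitely many components. Both issues are mild: all dg operations respect the $\Z^t$-grading, and the finiteness of $W_\F$ keeps the decomposition essentially finite on individual objects, so no convergence or completion issue arises.
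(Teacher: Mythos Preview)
Your proposal is correct and follows essentially the same approach as the paper: construct the equivalence in (i) by placing a complex in weight $\lambda$, deduce (ii) from the fact that $T$-equivariant morphisms are exactly the weight-zero part of the graded $\sheafhom$, and obtain (iii) from the finite weight decomposition of each object. Your version is somewhat more explicit about the $\Z^t$-grading on the internal Hom and about the meaning of a semi-orthogonal decomposition indexed by an infinite set, but the underlying argument is identical to the paper's.
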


\begin{proof}
For $\lambda \in \Z^t$, the functor $\sPerf(X) \ra \mc{A}_{\lambda}$ sending a locally free $\OO_X$-module $\F$ to the $T$-equivariant locally free $\OO_X$-module $\F(\lambda)$ such that $W_{\F(\lambda)} = \{\lambda\}$ and $\F(\lambda)_{\lambda} = \F$ is an equivalence.

Furthermore, $T$-equivariant maps $\F \ra \G$ of $T$-equivariant locally free $\OO_X$-modules respect the induced $\Z^t$-gradings on $\F$ and $\G$.
So if $W_{\F} \cap W_{\G} = \emptyset$, then $\sheafhom^T(\F, \G) = 0$, and this extends to perfect complexes.

Lastly, every object $M$ of $\mc{A}$ decomposes as a direct sum
\begin{equation*}
    M = \bigoplus_{\lambda \in \Z^t} M_{\lambda}
\end{equation*}
with $M_{\lambda}$ in $\mc{A}_{\lambda}$ for all $\lambda \in \Z^t$. 
\end{proof}

This yields the following computation of the representation ring of $T$. 

\begin{corollary} \label{corollary:ktheorysplittorusequivariant}
There is an isomorphism of rings
\begin{equation*}
    \K^T_0(X) \cong \frac{\K_0(X)[x_1, \dots, x_t, y_1, \dots, y_t]}{(x_iy_i + x_i + y_i \mid i=1,\dots, t)}.
\end{equation*}
\end{corollary}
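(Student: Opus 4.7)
The plan is to read off the additive structure of $\K_0^T(X)$ from the semi-orthogonal decomposition in proposition \ref{proposition:orthogonaldecompositionsplittoriequivariantperfectcomplexes}, and then identify the resulting Laurent polynomial algebra with the presentation in the statement via the substitution $(1+x_i)(1+y_i) = 1$.

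First I would apply additivity of $\K_0$ to the semi-orthogonal decomposition $\mc{A} = \langle \mc{A}_{\lambda} \mid \lambda \in \Z^t \rangle$, together with the equivalences $\mc{A}_{\lambda} \simeq \sPerf(X)$ of (i), to obtain a $\K_0(X)$-module isomorphism
\begin{equation*}
    \K_0^T(X) \cong \bigoplus_{\lambda \in \Z^t} \K_0(X),
\end{equation*}
where the summand indexed by $\lambda$ is generated, as a $\K_0(X)$-module, by the classes $[\F(\lambda)]$ with $\F$ a finite locally free $\OO_X$-module and $\F(\lambda)$ its $T$-equivariant refinement of pure weight $\lambda$.

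Next I would construct the comparison map. For each $i$, let $L_i$ denote $\OO_X$ endowed with the $T$-equivariant structure of weight $e_i \in \Z^t$, so that $L_i \otimes L_i^{-1} \cong \OO_X$ in $\Vect^T(X)$. Define a $\K_0(X)$-algebra homomorphism
\begin{equation*}
    \phi : \K_0(X)[x_1, \dots, x_t, y_1, \dots, y_t] \lra \K_0^T(X), \quad x_i \mapsto [L_i] - 1, \quad y_i \mapsto [L_i^{-1}] - 1.
\end{equation*}
A direct computation shows $\phi(x_iy_i + x_i + y_i) = [L_i][L_i^{-1}] - 1 = 0$, so $\phi$ descends to the quotient by the stated relations.

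It then remains to check that the induced map is an isomorphism. Setting $u_i = 1 + x_i$ and $v_i = 1 + y_i$, the relation $x_iy_i + x_i + y_i = 0$ is exactly $u_iv_i = 1$, so the quotient ring is canonically identified with the Laurent polynomial algebra $\K_0(X)[u_1^{\pm 1}, \dots, u_t^{\pm 1}]$, which has $\K_0(X)$-basis $\{u^\lambda : \lambda \in \Z^t\}$ where $u^\lambda = u_1^{\lambda_1} \cdots u_t^{\lambda_t}$. Under $\phi$, this basis element maps to $[L_1^{\lambda_1} \otimes \cdots \otimes L_t^{\lambda_t}]$, which is precisely the distinguished generator of the summand $\K_0(\mc{A}_{\lambda}) \cong \K_0(X)$ appearing in the decomposition from the first step. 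Hence $\phi$ identifies the $\K_0(X)$-basis $\{u^\lambda\}$ bijectively with a $\K_0(X)$-basis of $\K_0^T(X)$ and is therefore an isomorphism.

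No step presents a serious obstacle; the only point requiring mild care is the compatibility between the ring structure inherited from tensor product of equivariant complexes and the multi-graded decomposition, namely that tensoring with $L_i^{\pm 1}$ shifts the weight grading by $\pm e_i$ while acting on the underlying $\OO_X$-module trivially. Given proposition \ref{proposition:orthogonaldecompositionsplittoriequivariantperfectcomplexes}, this is a direct consequence of the correspondence between $T$-equivariant modules and $\Z^t$-graded modules.
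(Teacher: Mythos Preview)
Your proposal is correct and follows essentially the same approach as the paper: both invoke additivity for $\K$-theory along the semi-orthogonal decomposition of proposition~\ref{proposition:orthogonaldecompositionsplittoriequivariantperfectcomplexes}, set $x_i = [\OO_X(e_i)]-1$ and $y_i = [\OO_X(-e_i)]-1$, verify the relation $x_iy_i+x_i+y_i=0$, and then match the resulting $\K_0(X)$-basis $\{(1+x)^{\lambda}\}$ with the summands $\K_0(\mc{A}_\lambda)$. Your explicit identification of the quotient with the Laurent polynomial ring via $u_i=1+x_i$ makes the basis-matching step slightly more transparent than in the paper, but the argument is otherwise the same.
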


\begin{proof}
Note that the product on $\K^T_0(X)$ is induced by the tensor product of $\OO_X$-modules.
By additivity for $\K$-theory and proposition \ref{proposition:orthogonaldecompositionsplittoriequivariantperfectcomplexes}, there is an isomorphism of $\K_0(X)$-modules
\begin{equation*}
    \bigoplus_{\lambda \in \Z^t} \K_0(X) \lra \K^T_0(X).
\end{equation*}
For $\lambda \in \Z^t$, let $\OO_X(\lambda)$ be the $T$-equivariant $\OO_X$-module with $W_{\OO_X(\lambda)} = \{ \lambda \}$ and $\OO_X(\lambda)_{\lambda} = \OO_X$.
For $1 \leq i \leq n$, let $e_i \in \Z^t$ be the $i$-th unit vector, and write $x_i$ and $y_i$ for the $\K$-theory classes $[\OO_X(e_i)] - 1$ and $[\OO_X(-e_i)] - 1$, respectively.  
Then $x_iy_i + x_i + y_i = 0$ for all $i$. 
Setting $\lambda = (\lambda_1, \dots, \lambda_t)$, we see that
\begin{equation*}
    \OO_X(\lambda) = \prod_{i=1}^t (x_i + 1)^{\lambda_i},
\end{equation*}
where $(x_i + 1)^{\lambda_i} = (y_i + 1)^{-\lambda_i}$ if $\lambda_i < 0$. 
Hence
\begin{equation*}
    \K^T_0(X) \cong \frac{\K_0(X)[x_1, \dots, x_t, y_1, \dots, y_t]}{(x_iy_i + x_i + y_i \mid i=1,\dots, t)},
\end{equation*}
as was to be shown.
\end{proof}

Let $(\Z^t - \{0\})/\{\pm\}$ be the quotient of $\Z^t - \{0\}$ by the sign involution.
A useful system of representatives $C$ of this is xgiven by nonzero $(a_1, \dots, a_t) \in \Z^t$ such that the first nonzero entry $a_i$ is positive; if $t = 1$, $C$ consists of the positive integers.

\begin{corollary} \label{corollary:gwtheorysplittorusequivariant}
Let $\K(X)_{{\lambda}} = \K(\mc{A}_{\lambda})$ for all ${\lambda} \in C$. 
For each $i,n \in \Z$, the map of $\GW^{[0]}_0(X)$-modules
\begin{equation} \label{equation:grothendieckwittspectrumtrivialsplittorusaction}
    \GW^{[n]}_i(X) \oplus \bigoplus_{{\lambda} \in C} \K_i(X)_{\lambda} 
        \lra \GW^{T,[n]}_i(X)
\end{equation}
induced by the dg form functor
\begin{equation*}
    \begin{aligned}
        \mc{A}_0 \times \bigoplus\limits_{\lambda \in C} H\mc{A}_{\lambda}
            & \lra \mc{A} \\
        A_0 \times \bigoplus\limits_{\lambda \in C} (A_{\lambda}, B_{\lambda})  
            & \lra A_0 \oplus \left(\bigoplus\limits_{\lambda \in C} A_{\lambda} \oplus B_{\lambda}^{\vee}\right)
    \end{aligned}
\end{equation*}
is an isomorphism.
In particular, the map
\begin{equation} \label{equation:splittorusgrothendieckwittring}
    \begin{aligned}
        \GW^{[0]}_0(X) \oplus \bigoplus_{{\lambda} \in C} \K_0(X)_{\lambda} 
            & \lra \GW^{T, [0]}_0(X) \\
        (a, (b_{{\lambda}})_{{\lambda}}) 
            & \longmapsto a(0) + \sum_{{\lambda} \in C} \left(H_0(b_{{\lambda}}(\lambda)) - 2 \right)
    \end{aligned}
\end{equation}
is an isomorphism.
\end{corollary}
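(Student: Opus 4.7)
The approach is to read off the splitting from the semi-orthogonal decomposition of Proposition \ref{proposition:orthogonaldecompositionsplittoriequivariantperfectcomplexes} by tracking how the duality on $\mc{A}$ interacts with the pieces. The duality $\F \mapsto \sheafhom_{\OO_X}(\F,\OO_X)$ reverses $T$-weights, so it restricts to equivalences $\mc{A}_\lambda \xrightarrow{\sim} \mc{A}_{-\lambda}$ for each $\lambda \in \Z^t$. In particular $\mc{A}_0$ is stable under the duality and, via the equivalence of Proposition \ref{proposition:orthogonaldecompositionsplittoriequivariantperfectcomplexes}(i), is identified with $\sPerf(X)^{[\OO_X]}$ as a dg category with duality. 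For $\lambda \in C$, the full dg subcategory on $\mc{A}_\lambda \oplus \mc{A}_{-\lambda}$ with the induced swap duality is equivalent as a dg category with duality to the hyperbolic category $H\mc{A}_\lambda$.

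To apply Schlichting's additivity for GW-theory of dg categories with duality, I would exhaust $\mc{A}$ by the duality-stable dg subcategories $\mc{A}^{\leq N}$ spanned by objects whose weights all lie in $\{-N,\ldots,N\}^t$. Each $\mc{A}^{\leq N}$ admits a finite semi-orthogonal decomposition whose blocks consist of $\mc{A}_0$ together with the hyperbolic pairs $(\mc{A}_\lambda, \mc{A}_{-\lambda})$ for $\lambda \in C \cap \{-N,\ldots,N\}^t$. Iterating GW-additivity along these blocks, and using that the GW-spectrum of a hyperbolic category is the underlying K-theory spectrum, yields
\[
\GW^{[n]}(\mc{A}^{\leq N}) \simeq \GW^{[n]}(X) \oplus \bigoplus_{\lambda \in C \cap \{-N,\ldots,N\}^t} \K(X)_\lambda.
\]
Since both $\GW^{[n]}$ and $\K$ commute with filtered colimits of dg categories, passing to $N \to \infty$ through $\mc{A} = \colim_N \mc{A}^{\leq N}$ and then taking $\pi_i$ gives the isomorphism (\ref{equation:grothendieckwittspectrumtrivialsplittorusaction}).

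For the explicit description (\ref{equation:splittorusgrothendieckwittring}) of the splitting at $\pi_0$, one simply traces through the construction. The first summand is the map induced on $\pi_0$ by the inclusion $\mc{A}_0 \hookrightarrow \mc{A}$ of dg categories with duality, namely $a \mapsto a(0)$. For $\lambda \in C$, the component on $\K_0(X)_\lambda$ factors as the hyperbolic map $H_0 \colon \K_0(\mc{A}_\lambda) \to \GW_0(H\mc{A}_\lambda)$ followed by the inclusion $\GW_0(H\mc{A}_\lambda) \hookrightarrow \GW^{T,[0]}_0(\mc{A})$; the $-2$ is a rank normalization that will be convenient when the resulting generators are used to describe the Hermitian augmentation ideal in the next section.

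The main obstacle is the bookkeeping around the infinite semi-orthogonal decomposition. One must verify that Schlichting's GW-additivity can be iterated compatibly inside each finite $\mc{A}^{\leq N}$, that the filtered colimit $\mc{A} = \colim_N \mc{A}^{\leq N}$ is compatible with $\GW^{[n]}$ and $\K$, and, more delicately, that the pair $(\mc{A}_\lambda, \mc{A}_{-\lambda})$ equipped with the swap duality is genuinely equivalent to $H\mc{A}_\lambda$ as a dg category \emph{with duality}, and not only as the underlying dg category.
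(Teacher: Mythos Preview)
Your argument is correct and rests on the same idea as the paper's: the duality exchanges $\mc{A}_{\lambda}$ with $\mc{A}_{-\lambda}$, so the decomposition of Proposition~\ref{proposition:orthogonaldecompositionsplittoriequivariantperfectcomplexes} breaks into a duality-fixed piece $\mc{A}_0$ and hyperbolic pairs, and one applies additivity for $\GW$.

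The paper organizes the bookkeeping a bit more economically. Instead of handling the hyperbolic pairs one at a time and passing to a filtered colimit, it bundles all of them together: setting $\mc{A}_+ = \langle \mc{A}_{\lambda} \mid \lambda \in C\rangle$, one has a single three-term semi-orthogonal decomposition $\mc{A} = \langle \mc{A}_+^{\vee}, \mc{A}_0, \mc{A}_+ \rangle$ with the duality swapping the outer terms, so Schlichting's additivity \cite[proposition~6.8]{schlichting17} applies once and yields $\GW^{[n]}(\mc{A}) \simeq \GW^{[n]}(\mc{A}_0) \oplus \K(\mc{A}_+)$. The infinite decomposition is then dealt with entirely on the $\K$-theory side, where $\K(\mc{A}_+) \simeq \bigoplus_{\lambda \in C} \K(\mc{A}_{\lambda})$ follows from ordinary $\K$-theory additivity. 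This sidesteps the issues you flag in your last paragraph: there is no need to iterate $\GW$-additivity over infinitely many blocks or to invoke commutation of $\GW^{[n]}$ with filtered colimits of dg categories with duality.

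For the second assertion the paper is also a bit more explicit than ``rank normalization'': the map~(\ref{equation:splittorusgrothendieckwittring}) is obtained from the $\pi_0$ of~(\ref{equation:grothendieckwittspectrumtrivialsplittorusaction}) by precomposing with the automorphism of $\GW^{[0]}_0(X)\oplus\bigoplus_{\lambda\in C}\K_0(X)_{\lambda}$ that is the identity on the first summand and $[\OO_X(\lambda)] \mapsto [\OO_X(\lambda)]-1$ on each $\K_0(X)_{\lambda}$, which accounts precisely for the $-2$.
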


\begin{proof}
Let $\lambda \in \Z^t$.
If $\lambda = 0$, then $\mc{A}_{\lambda}$ is fixed by the standard duality on $\mc{A}$.
Otherwise, $(\mc{A}_{\lambda})^{\vee} = \mc{A}_{-\lambda}$. 
Thus, letting $\mc{A}_+ = \langle \mc{A}_{\lambda} \mid \lambda \in C \rangle$, there is a semi-orthogonal decomposition $\mc{A} = \langle \mc{A}_+^{\vee}, \mc{A}_0, \mc{A}_+ \rangle$, and the result follows from proposition \ref{proposition:orthogonaldecompositionsplittoriequivariantperfectcomplexes} and additivity for Grothendieck-Witt theory \cite[proposition 6.8]{schlichting17}.

Furthermore, there is an automorphism on
\begin{equation*}
    \GW^{[0]}_0(X) \oplus \bigoplus_{{\lambda} \in C} \K_0(X)_{\lambda},
\end{equation*}
which is the identity on $\GW^{[0]}_0(X)$ and given by $[\OO_X(\lambda)] \mapsto [\OO_X(\lambda)] - 1$ for $\lambda \in C$. 
By composing the map of $\GW^{[0]}_0(X)$-modules induced by (\ref{equation:grothendieckwittspectrumtrivialsplittorusaction}) with this automorphism, one obtains the $-2$ term on the right hand side of (\ref{equation:splittorusgrothendieckwittring}). 
\end{proof}

The following two definitions are instrumental in the statement and proof of Atiyah-Segal completion for split tori.

\begin{definition} \label{def:hermitianaugmentationideal}
Let $RO = \GW^{T,[0]}_0(X)$. 
The map $\alpha: RO \ra \GW^{[0]}_0(X)$ which forgets the equivariant structure is called the \emph{Hermitian augmentation map}.
Its kernel $IO = \ker \alpha$ is called the \emph{Hermitian augmentation ideal}.
\end{definition}

\begin{definition} \label{def:reducedgrothendieckwittgroups}
For any $n,i \in \Z$, the kernel of the map $\GW^{T,[n]}_i(X) \ra \GW^{[n]}_i(X)$ is the \emph{reduced equivariant Grothendieck-Witt group} $\widetilde{\GW}{}^{T,[n]}_i(X)$.
The groups $\widetilde{K}{}^T_i(X)$ and $\widetilde{\W}{}^{T,[n]}(X)$ are defined similarly.
\end{definition}

For $R = \K^T_0(X)$, the augmentation map $\alpha: R \ra \K_0(X)$ has kernel $I$, which is given by $(x_1, \dots, x_t, y_1, \dots, y_t)$ under the isomorphism of corollary \ref{corollary:ktheorysplittorusequivariant}.
For $\lambda, \mu \in \N^t$, let 
\begin{equation*}
    \bm{x}^{\lambda}\bm{y}^{\mu} = \prod_{i=1}^t x_i^{\lambda_i}y_i^{\mu_i}.
\end{equation*}
The identity $x_iy_i = -(x_i + y_i)$ on $R$ shows that $R$ is generated as a $\K_0(X)$-module by monomials $\bm{x}^{\lambda}\bm{y}^{\mu}$ with $\lambda, \mu \in \N^t$ such that, for each $i$, either $\lambda_i = 0$ or $\mu_i = 0$. 

By corollary \ref{corollary:gwtheorysplittorusequivariant},
\begin{equation*}
    IO \cong \bigoplus_{{\lambda} \in C} \K_0(X)
\end{equation*}
under the isomorphism (\ref{equation:splittorusgrothendieckwittring}). 
The standard duality on $\mc{A}$ induces an involution $\vee :R \ra R$, given by $x_i \mapsto y_i$. 
Let $F_0: \GW^{T,[0]}_0(X) \ra \K^T_0(X)$ be the forgetful map. 
Note that $F_0$ restricts to a map $F_0: IO \ra I$. 
The following lemma shows that $RO$ splits as $\GW^{[0]}_0(X)$ and the $\vee$-fixed points of $I$.

\begin{lemma} \label{lemma:hermitianidealisfixedpointsofaugmentationideal}
Let $I_+$ be the $\vee$-fixed points of $I$.
The map $F_0: IO \ra I$ is injective with image $I_+$.
\end{lemma}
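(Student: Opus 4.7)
The plan is to use the explicit description of $IO$ from corollary \ref{corollary:gwtheorysplittorusequivariant}, the weight decomposition of $R$ from proposition \ref{proposition:orthogonaldecompositionsplittoriequivariantperfectcomplexes}, and the identity $F_0 \circ H_0 = [\cdot] + [\cdot^\vee]$, combined with the fact that the standard duality sends a sheaf $b(\lambda)$ concentrated in weight $\lambda$ to $b^\vee(-\lambda)$. With this in hand, $F_0$ becomes a computation that can be matched weight by weight.

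First I would write a general element of $IO$ under the isomorphism of corollary \ref{corollary:gwtheorysplittorusequivariant} as $a(0) + \sum_{\lambda \in C} (H_0(b_\lambda(\lambda)) - 2)$, where $(b_\lambda) \in \bigoplus_{\lambda \in C} \K_0(X)$ is arbitrary and $a \in \GW^{[0]}_0(X)$ is uniquely determined by the requirement that the total augment to zero, namely $a = -\sum_\lambda (H_0(b_\lambda) - 2)$. Applying $F_0$ and sorting the result by weight using proposition \ref{proposition:orthogonaldecompositionsplittoriequivariantperfectcomplexes}, the weight-$\lambda$ component (for $\lambda \in C$) is $[b_\lambda]$, the weight-$(-\lambda)$ component is $[b_\lambda^\vee]$, and the weight-$0$ component simplifies to $-\sum_{\lambda \in C}([b_\lambda] + [b_\lambda^\vee])$ after substituting the explicit form of $F_0(a)$.

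Injectivity then follows immediately, because the weight-$\lambda$ component of the image determines $b_\lambda$ for each $\lambda \in C$; the image is manifestly $\vee$-fixed because the involution swaps the weight-$\lambda$ and weight-$(-\lambda)$ strata while dualizing $\K_0(X)$-coefficients, interchanging $[b_\lambda]$ and $[b_\lambda^\vee]$, and fixing the weight-$0$ term termwise. For surjectivity, I would start with $f \in I_+$, decompose $f = \sum_\mu b_\mu(\mu)$ by weight, and read off from $\sum_\mu b_\mu = 0$ (the $I$-condition) together with $b_{-\mu} = b_\mu^\vee$ ($\vee$-fixedness) that the tuple $(b_\lambda)_{\lambda \in C}$ produces a preimage in $IO$: the weight-$\pm\lambda$ components of $F_0$ match $f$ by construction, and the weight-$0$ component $-\sum_{\lambda \in C}([b_\lambda] + [b_\lambda^\vee]) = -\sum_{\mu \neq 0} b_\mu = b_0$ matches automatically.

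The bookkeeping hazard to watch is the interaction of duality with weights: one must keep the identity $[b(\lambda)]^\vee = [b^\vee(-\lambda)]$ straight and not conflate it with $[b(-\lambda)]$. Otherwise the $\vee$-involution would appear to act trivially on $\K_0(X)$-coefficients and the statement would degenerate. Beyond this, no serious input is required besides corollary \ref{corollary:gwtheorysplittorusequivariant} and the weight decomposition already recorded.
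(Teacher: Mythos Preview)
Your argument is correct and genuinely different from the paper's. You work entirely inside the weight decomposition of corollary~\ref{corollary:gwtheorysplittorusequivariant}: writing a general element of $IO$ in coordinates, applying the identity $F_0 H_0 = 1 + \vee$ and sorting by weight, you read off injectivity, containment in $I_+$, and surjectivity by direct inspection. The only external input is proposition~\ref{proposition:orthogonaldecompositionsplittoriequivariantperfectcomplexes} and corollary~\ref{corollary:gwtheorysplittorusequivariant}.

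The paper instead passes through the Karoubi sequence: the key structural observation is that $\W^{T,[n]}(X) \cong \W^{[n]}(X)$, so $\widetilde{\W}{}^{T,[0]}(X) = 0$, whence the hyperbolic map $H_0: I \to IO$ is surjective. The paper then writes down an explicit inverse $G_0: I_+ \to IO$ on the $\K_0(X)$-module generators $\bm{x}^{\lambda}\bm{y}^{\mu} + \bm{x}^{\mu}\bm{y}^{\lambda}$ of $I_+$ and checks that $F_0$ and $G_0$ are mutually inverse. Your route is more elementary and self-contained; the paper's route is more conceptual in that it isolates the vanishing of reduced equivariant Witt groups as the reason the statement holds, and it foregrounds the monomial generators $\bm{x}^{\lambda}\bm{y}^{\mu} + \bm{x}^{\mu}\bm{y}^{\lambda}$ that are immediately reused in the proof of lemma~\ref{lemma:hermitianidealgenerators}. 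Either way the content is the same; your proof just trades the Karoubi-sequence input for a longer but entirely explicit bookkeeping computation.
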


\begin{proof}
Consider the following diagram
\begin{equation*}
    \begin{tikzcd}
        \widetilde{\GW}{}^{T,[3]}_{0}(X) \arrow[r, "F_3"] \arrow[d]
            & \widetilde{\K}{}^T_{0}(X) \arrow[r, "H_0"] \arrow[d]
                & \widetilde{\GW}{}^{T,[0]}_{0}(X) \arrow[r] \arrow[d]
                    & \widetilde{\W}{}^{T,[0]}_{0}(X) \arrow[d] \\
        {\GW}^{T,[3]}_{0}(X) \arrow[r, "F_3"] \arrow[d]
            & {\K}^T_{0}(X) \arrow[r, "H_0"] \arrow[d]
                & {\GW}^{T,[0]}_{0}(X) \arrow[r] \arrow[d]
                     & {\W}^{T,[0]}(X) \arrow[d] \\
        \GW^{[3]}_{0}(X) \arrow[r, "F_3"]
            & \K_{0}(X) \arrow[r, "H_0"]
                & \GW^{[0]}_{0}(X) \arrow[r]
                     & \W^{[0]}(X)  
    \end{tikzcd}
\end{equation*}
of Karoubi sequences \cite[theorem 6.1]{schlichting17} of $RO$-modules.
Note that $\widetilde{\K}{}^T_0(X) = I$ and $\widetilde{\GW}{}^{T,[0]}_0(X) = IO$.
The image of the forgetful map $F_0: IO \ra I$ is necessarily contained in $I_+$. 
Furthermore, $I_+$ is generated as a $\K_0(X)$-module by elements of the form $\bm{x}^{\lambda}\bm{y}^{\mu} + \bm{x}^{\mu}\bm{y}^{\lambda}$, where $\lambda, \mu \in \N^t$. 
Define a map $G_0: I_+ \ra IO$ by $\bm{x}^{\lambda}\bm{y}^{\mu} + \bm{x}^{\mu}\bm{y}^{\lambda} \mapsto H_0(\bm{x}^{\lambda}\bm{y}^{\mu})$.
This map is well-defined since $H_0(\bm{x}^{\lambda}\bm{y}^{\mu}) = H_0(\bm{x}^{\mu}\bm{y}^{\lambda})$.

Since $\W^{T,[n]}(X) \cong \W^{[n]}(X)$ for $n \in \Z$, it follows that $\widetilde{\W}{}^{T,[0]}(X) = 0$.
Therefore, the hyperbolic map $H_0: I \ra IO$ is surjective, and one may write $F_0: H_0(I) \ra I_+$ and $G_0: I_+ \ra H_0(I)$.
Upon inspection, $F_0$ and $G_0$ are inverse to each other, which concludes the proof.
\end{proof}

We want to complete Grothendieck-Witt spectra and K-theory spectra with respect to $IO$.
The following lemma states that $IO$ is finitely generated, which ensures that these completions are sufficiently well-behaved.

\begin{lemma} \label{lemma:hermitianidealgenerators}
As an ideal of $R_+$, $I_+$ is generated by elements of the form $\bm{x}^{\gamma} + \bm{y}^{\gamma}$, where $\gamma \in \{0,1\}^t - \{0\}^t$.
In particular, $I_+$ is a finitely generated ideal with a generating set of $2^t-1$ elements.
\end{lemma}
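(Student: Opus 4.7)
The plan is to verify that the $2^t - 1$ elements $\bm{x}^\gamma + \bm{y}^\gamma$ with $\gamma \in \{0,1\}^t - \{0\}^t$ generate $I_+$ as an ideal of $R_+$, by checking that every $\K_0(X)$-module generator of $I_+$ lies in this ideal. By the description of $R$ as a $\K_0(X)$-module preceding the lemma, together with the generators of $I_+$ identified in the proof of lemma \ref{lemma:hermitianidealisfixedpointsofaugmentationideal}, such $\K_0(X)$-module generators may be taken to be the $\vee$-symmetric monomials $\bm{x}^\lambda \bm{y}^\mu + \bm{x}^\mu \bm{y}^\lambda$ with $\lambda, \mu \in \N^t$ of disjoint support and $(\lambda,\mu) \neq (0,0)$. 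For brevity, write $s_i := x_i + y_i = \bm{x}^{e_i} + \bm{y}^{e_i}$, which lies in $R_+$.

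The central computational fact is the relation $x_i y_i = -s_i$, which iterates to
\begin{equation*}
x_i^a y_i^b \;=\; -s_i\, x_i^{a-1} y_i^{b-1} \qquad \text{whenever } a, b \geq 1,
\end{equation*}
so any overlap of $x_i$ and $y_i$ can be traded for a factor in $R_+$. Using this, I will induct on $N := |\lambda| + |\mu|$. The base case $N = 1$ is immediate, since the element is $s_i$ for some $i$, hence a generator.

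For the inductive step, first assume some $\lambda_i \geq 2$ (the case $\mu_i \geq 2$ is symmetric). A direct expansion of $s_i \cdot (\bm{x}^{\lambda-e_i}\bm{y}^\mu + \bm{x}^\mu \bm{y}^{\lambda-e_i})$, in which two of the four resulting terms require the reduction above (where $y_i$ meets $x_i^{\lambda_i-1}$ or $x_i$ meets $y_i^{\lambda_i-1}$, using the disjointness of $\text{supp}(\lambda)$ and $\text{supp}(\mu)$ to guarantee this is the only overlap), yields the identity
\begin{equation*}
\bm{x}^\lambda \bm{y}^\mu + \bm{x}^\mu \bm{y}^\lambda \;=\; s_i\,\bigl[ (\bm{x}^{\lambda-e_i}\bm{y}^\mu + \bm{x}^\mu\bm{y}^{\lambda-e_i}) + (\bm{x}^{\lambda-2e_i}\bm{y}^\mu + \bm{x}^\mu\bm{y}^{\lambda-2e_i}) \bigr].
\end{equation*}
Both bracketed summands have total degree strictly less than $N$; the inductive hypothesis places each of them in the ideal generated by our candidate generators, except in the degenerate case $\lambda = 2e_i, \mu = 0$, where the second summand equals the constant $2 \in R_+$, which is harmless as it is multiplied by $s_i$.

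Otherwise, $\lambda, \mu \in \{0,1\}^t$, so $\lambda = e_A$ and $\mu = e_B$ for disjoint $A, B \subseteq \{1, \dots, t\}$ with $A \cup B \neq \emptyset$. If one of $A, B$ is empty, then the element already equals $\bm{x}^{e_{A \cup B}} + \bm{y}^{e_{A \cup B}}$, a generator. If both are nonempty, the disjointness forces $e_A + e_B = e_{A \cup B}$ and prevents any $x_i y_i$ overlap, and the expansion
\begin{equation*}
(\bm{x}^{e_A} + \bm{y}^{e_A})(\bm{x}^{e_B} + \bm{y}^{e_B}) \;=\; (\bm{x}^{e_{A \cup B}} + \bm{y}^{e_{A \cup B}}) + (\bm{x}^{e_A}\bm{y}^{e_B} + \bm{x}^{e_B}\bm{y}^{e_A})
\end{equation*}
expresses the target as an explicit $R_+$-combination of two generators. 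The main obstacle is the bookkeeping in the $\lambda_i \geq 2$ case: one must apply the reduction symmetrically on the two halves $\bm{x}^\lambda \bm{y}^\mu$ and $\bm{x}^\mu \bm{y}^\lambda$ so that the residual terms remain among the previously-treated $\vee$-symmetric generators and the inductive hypothesis genuinely applies.
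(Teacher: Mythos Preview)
Your proof is correct. Both your argument and the paper's use the same ingredients---the relation $x_iy_i = -(x_i+y_i)$ and product identities of the form $(\bm{x}^a+\bm{y}^a)(\bm{x}^b+\bm{y}^b) = (\bm{x}^{a+b}+\bm{y}^{a+b}) + (\bm{x}^a\bm{y}^b+\bm{x}^b\bm{y}^a)$---but the inductions are organized differently. The paper proceeds in two phases: first it shows $\bm{x}^\lambda+\bm{y}^\lambda\in J$ for all $\lambda\in\N^t$ by induction on the \emph{maximum entry} of $\lambda$, peeling off the top layer via a multi-index $\gamma\in\{0,1\}^t$ all at once; then it deduces the mixed case $\bm{x}^\lambda\bm{y}^\mu+\bm{x}^\mu\bm{y}^\lambda\in J$ in a single stroke from the product identity. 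You instead run one induction on the total degree $|\lambda|+|\mu|$ and reduce a single coordinate at a time via multiplication by $s_i$, treating the $\{0,1\}^t$ case directly at the end. Your route is slightly more uniform (one pass, no separation into $\mu=0$ and general cases), while the paper's multi-index peeling reaches the pure case $\bm{x}^\lambda+\bm{y}^\lambda$ more quickly. Both are equally elementary; the difference is purely organizational.
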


\begin{proof}
Let $J$ be the ideal generated by elements of the form $\bm{x}^{\gamma} + \bm{y}^{\gamma}$, where $\gamma \in \{0,1\}^t - \{0\}^t$.
Since $I_+$ is generated as a free abelian group by elements of the form $\bm{x}^{\lambda}\bm{y}^{\mu} + \bm{x}^{\mu}\bm{y}^{\lambda}$ with $\lambda, \mu \in \N^t$, it suffices to show that $J$ contains these elements.

First, it will be shown by induction on $n$ that $J$ contains all elements of the form $\bm{x}^{\lambda} + \bm{y}^{\lambda}$, where $\lambda \in \{0,\dots,n\}^t$ for arbitrary $n \in \N$.
For $n=1$, this follows directly from the definition of $J$.
Now assume that $\bm{x}^{\lambda} + \bm{y}^{\lambda} \in J$ for all $\lambda \in \{0,\dots,n\}^t$ and let $\lambda \in \{0, \dots, n+1\}^t$. 
Then $\lambda = \mu + \gamma$ with $\mu \in \{0,\dots,n\}^t$ and $\gamma \in \{0,1\}^t$ such that $\gamma_i = 1$ if and only if $\lambda_i = n+1$.
Note that
\begin{equation*}
    (\bm{x}^{\mu} + \bm{y}^{\mu})(\bm{x}^{\gamma} + \bm{y}^{\gamma}) = \bm{x}^{\lambda} + \bm{y}^{\lambda} + \bm{x}^{\mu}\bm{y}^{\gamma} + \bm{x}^{\gamma}\bm{y}^{\mu},
\end{equation*}
and $(\bm{x}^{\mu} + \bm{y}^{\mu})(\bm{x}^{\gamma} + \bm{y}^{\gamma}) \in J$.
Moreover, $\mu - \gamma \in \{0,\dots,n\}^t$, so
\begin{equation*}
    \bm{x}^{\mu}\bm{y}^{\gamma} + \bm{x}^{\gamma}\bm{y}^{\lambda} = 
    \bm{x}^{\mu-\gamma}\bm{x}^{\gamma}\bm{y}^{\gamma} + \bm{x}^{\gamma}\bm{y}^{\gamma}\bm{y}^{\mu-\gamma} =
    (\bm{x}^{\mu-\gamma} + \bm{y}^{\mu-\gamma}) \prod_{i=1}^t(-(x_i + y_i))^{\gamma_i}
\end{equation*}
is contained in $J$.
By induction, it follows that $\bm{x}^{\lambda} + \bm{y}^{\lambda} \in J$ for all $\lambda \in \N^t$, as claimed.

Now let $\lambda, \mu \in \N^t$ and consider $\bm{x}^{\lambda}\bm{y}^{\mu} + \bm{x}^{\mu}\bm{y}^{\lambda}$.
Note that
\begin{equation*}
    (\bm{x}^{\lambda} + \bm{y}^{\lambda})(\bm{x}^{\mu} + \bm{y}^{\mu}) = 
    \bm{x}^{\lambda + \mu} + \bm{y}^{\lambda + \mu} + \bm{x}^{\lambda}\bm{y}^{\mu} + \bm{x}^{\mu}\bm{y}^{\lambda}
\end{equation*}
is contained in $J$, and since $\bm{x}^{\lambda + \mu} + \bm{y}^{\lambda + \mu} \in J$, this yields $\bm{x}^{\lambda}\bm{y}^{\mu} + \bm{x}^{\mu}\bm{y}^{\lambda} \in J$.
\end{proof}

\section{The completion theorem}
    \label{section:thecompletiontheorem}

We study completions with respect to the Hermitian augmentation ideal in section \ref{subsection:completinghermitianaugmentationideal} and prove the completion theorem for the $T$-equivariant Hermitian $\K$-theory of a scheme $X$ with a trivial action of a split torus $T$ in section \ref{subsection:thecompletiontheoremfortrivialactions}.
The idea is to prove the result for triangular Witt groups (lemma \ref{lemma:ascompletionforwittgroups}) and then employ Karoubi induction to prove our main theorem \ref{theorem:ascompletionfortrivialactionofsplittorus}. 

\subsection{Completing with respect to the Hermitian augmentation ideal}
    \label{subsection:completinghermitianaugmentationideal}

We first show that the two completions of $T$-equivariant $\K$-theory with respect to the augmentation ideal and the Hermitian augmentation ideal agree.
The following result is \cite[lemma 8.2.12]{rohrbach21}.

\begin{lemma}[Filtration lemma] \label{lemma:filtrationofaugmentationideal}
For a linear algebraic group $G$ over a field $k$ of characteristic not $2$, let $R_G = \K^G_0(k)$ and $RO_G = \GW^{G,[0]}(k)$ with $I_G \subset R_G$ and $IO_G \subset RO_G$ the $\K$-theoretic augmentation ideal and Hermitian augmentation ideal, respectively.
For any linear algebraic group $G$ over $k$, the $I_G$-adic and $IO_G$-adic topologies on $R_G$ coincide.
\end{lemma}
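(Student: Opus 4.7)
The forgetful ring homomorphism $F_0 : RO_G \to R_G$ is compatible with augmentations and restricts to $F_0 : IO_G \to I_G$; viewing $R_G$ as an $RO_G$-algebra via $F_0$, the $IO_G$-adic topology on $R_G$ has basic open neighborhoods $I_+^n R_G = (I_+ R_G)^n$, where $I_+ := F_0(IO_G) \subseteq I_G$. Since $I_+ R_G \subseteq I_G$, every $IO_G$-open neighborhood is already $I_G$-open. It therefore suffices to produce a uniform integer $N$ such that $I_G^N \subseteq I_+ R_G$; then $I_G^{Nn} \subseteq (I_+ R_G)^n$ for every $n$, and the reverse inclusion of topologies follows.

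The strategy is to work additive generator by additive generator. For each representation $V$ of $G$, set $v := [V] - \dim V \in I_G$. Two classes in $I_+$ control $v$. First, the naturality of the hyperbolic map $H_0 : R_G \to RO_G$ together with the identity $F_0 \circ H_0 = 1 + \vee$ on K-theory gives $v + v^\vee = F_0(H_0(v)) \in I_+$, where $\vee$ is the duality involution on $R_G$. Second, the $G$-equivariant trace pairing on $\End V \cong V \otimes V^\vee$, non-degenerate since $\ch k \neq 2$, defines a class $[\End V, \tn{tr}] \in RO_G$ whose image in $GW_0(k)$ under the augmentation map is $\dim V \cdot \langle 1 \rangle + \tfrac{1}{2}\dim V(\dim V - 1) \cdot h$ (diagonal matrix units contribute $\dim V$ copies of $\langle 1 \rangle$, while the pairs $\{E_{ij}, E_{ji}\}$ with $i<j$ contribute $\binom{\dim V}{2}$ hyperbolic planes); subtracting the corresponding integer combination of $1_{RO_G}$ and $[H(k)] \in RO_G$ places this class in $IO_G$ and produces $[V][V^\vee] - (\dim V)^2 \in I_+$. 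Expanding
\begin{equation*}
v v^\vee \;=\; [V][V^\vee] - \dim V \cdot ([V] + [V^\vee]) + (\dim V)^2
\end{equation*}
and applying both identities then gives $v v^\vee \in I_+$, whence
\begin{equation*}
v^2 \;=\; v(v + v^\vee) - v v^\vee \;\in\; I_+ R_G.
\end{equation*}

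To finish, one uses that the representation ring of a linear algebraic group is Noetherian, so $I_G$ is finitely generated by, say, $r$ elements $v_1, \dots, v_r$ of the above form. A pigeonhole argument then yields $I_G^{r+1} \subseteq I_+ R_G$: any spanning monomial $v_{j_1} \cdots v_{j_{r+1}}$ of $I_G^{r+1}$ must repeat some index $j$, hence contains $v_j^2 \in I_+ R_G$ as a factor. This is the required uniform bound. I expect the main obstacle to be the explicit identification of the augmentation of $[\End V, \tn{tr}]$ in $GW_0(k)$, in particular the verification that the hyperbolic correction $\tfrac{1}{2}\dim V(\dim V - 1)$ is an integer and that the resulting class lies in $IO_G$; once this is in hand, the combinatorial step and the passage to a uniform $N$ are essentially formal.
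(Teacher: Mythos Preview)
Your argument is correct and genuinely different from the paper's. The paper does not work directly with the trace form; instead it chooses an embedding $\iota:G\hookrightarrow H=\SO_{2m+1}$, uses Zibrowius's result that every irreducible $H$-representation carries a symmetric form to conclude that $F_H:IO_H\to I_H$ is surjective, and then deduces the chain $\iota^*(I_H)R_G\subset F_G(IO_G)R_G\subset I_G$. The comparison of the $\iota^*(I_H)$-adic and $I_G$-adic topologies on $R_G$ is then imported wholesale from Edidin--Graham. Your route bypasses both external inputs: the class $[\End V,\tr]-\dim V\cdot 1-\binom{\dim V}{2}H_0(1)\in IO_G$ together with $H_0(v)\in IO_G$ give $vv^\vee\in I_+$ and $v+v^\vee\in I_+$ by hand, and the identity $v^2=v(v+v^\vee)-vv^\vee$ finishes the elementwise step. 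Two remarks: (i) your non-degeneracy of the trace form on $\End V$ holds in every characteristic (the basis $\{E_{ij}\}$ already exhibits a perfect pairing), so the appeal to $\ch k\neq 2$ there is unnecessary, though it is of course needed for the ambient $\GW$-framework; (ii) the finiteness you invoke---$I_G$ finitely generated as an ideal---is exactly the Noetherian input that in the paper's proof is hidden inside the Edidin--Graham citation (finite generation of $R_G$ over $R_H$), so neither argument is free of this. What you gain is a self-contained proof with an explicit bound $I_G^{r+1}\subset I_+R_G$; what the paper's approach gains is a template that may adapt more readily when one later replaces $T$ by a connected split reductive group.
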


\begin{proof}
Fix an embedding $\iota: G \ra H$ with $H = \SO_{2m+1}$ for some $m \in \N$, which can be realized as the composition of embeddings
\begin{equation*}
    G \lra \GL_m \lra \SO_{2m} \lra \SO_{2m+1}.
\end{equation*}
Let $F_G: RO_G \ra R_G$ be the forgetful map.
It will be shown that
\begin{equation} \label{equation:filtrationlemmaaugmentationidealinclusions}
    \iota^*(I_H)R_G \subset F_G(IO_G) R_G \subset I_G R_G,
\end{equation}
after which it suffices to show that the $\iota^*(I_H)$-adic and $I_G$-adic topologies on $R_G$ coincide.

The second inclusion of (\ref{equation:filtrationlemmaaugmentationidealinclusions}) follows from the commutativity of 
\begin{equation*}
    \begin{tikzcd}
        RO_G \arrow[r, "F_G"] \arrow[d]
            & R_G \arrow[d] \\
        \GW_0(k) \arrow[r, "F"]
            & \K_0(k).
    \end{tikzcd}
\end{equation*}
Since $H$ is split reductive and all irreducible representations of $H$ are symmetric by \cite[lemma 3.14]{zibrowius15}, the forgetful map $F_H: RO_H \ra R_H$ is surjective.
The trivial map $\phi: H \ra H$ given by $g \mapsto 1$ induces morphisms $\phi^*: R_H \ra R_H$ and $\phi^*: RO_H \ra RO_H$ which replace any $H$-representation by the trivial one.  
As elements of $I_H$ are of the form $a - b$ with $\phi^*a = \phi^*b$, the map $1 - \phi^*: R_H \ra I_H$ splits the inclusion $I_H \ra R_H$. 
Similarly, $1 - \phi^*: RO_H \ra IO_H$ splits $IO_H \ra RO_H$. 
Thus the commutative diagram
\begin{equation*}
    \begin{tikzcd}
        RO_H \arrow[r, two heads, "F_H"] \arrow[d, two heads, swap, "1 - \phi^*"]
            & R_H \arrow[d, two heads, "1 - \phi^*"] \\
        IO_H \arrow[r, "F_H"] 
            & I_H
    \end{tikzcd}
\end{equation*}
shows that $F_H: IO_H \ra I_{H}$ is surjective.
Consequently, $\iota^*(I_H) = \iota^*(F_H(IO_H))$.
Therefore, the commutativity of
\begin{equation*}
    \begin{tikzcd}
        IO_H \arrow[dd] \arrow[dr] \arrow[rr]
            &[-1em] { }
                & I_H \arrow[d] \\[-1em]
        { }
            & RO_H \arrow[r, "F_H"] \arrow[d, "\iota^*"]
                & R_H \arrow[d, "\iota^*"] \\
        IO_G \arrow[r]
            & RO_G \arrow[r, "F_G"]    
                & R_G
    \end{tikzcd}
\end{equation*}
shows that $\iota^*(I_H) \subset F_G(IO_G)$, which proves (\ref{equation:filtrationlemmaaugmentationidealinclusions}).
Hence, the $\iota^*(I_H)$-adic and $I_G$-adic topologies on $R_G$ coincide by \cite[corollary 6.1]{edidin00}, and the result follows. 
\end{proof}

We obtain the following derived variant of the filtration lemma as a corollary.

\begin{lemma}[Derived filtration lemma] \label{lemma:derivedfiltrationlemma}
For an algebraic group $G$ over a field $k$ of characteristic not $2$ acting on a scheme $X$ over $k$ with the resolution property, the derived completions $\K^G(X)^{\wedge}_{IO}$ and $\K^G(X)^{\wedge}_{I}$ are equivalent.
\end{lemma}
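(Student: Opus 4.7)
The plan is to bootstrap Lemma~\ref{lemma:filtrationofaugmentationideal} via the following structural feature of derived completion due to Lurie: for a commutative ring $A$ and a finitely generated ideal $J \subset A$, the derived $J$-completion of a module spectrum over an $E_\infty$-ring with $\pi_0 = A$ depends on $J$ only through the $J$-adic topology on $A$ — equivalently, through the radical $\sqrt{J}$, or through the closed subset $V(J) \subset \Spec A$.

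First I would set things up uniformly. The spectrum $\K^G(X)$ is a module over the $E_\infty$-ring $\K^G(k)$ via the pullback along $X \to \Spec k$, so both $\K^G(X)^{\wedge}_{IO}$ and $\K^G(X)^{\wedge}_{I}$ are derived completions of this module spectrum with respect to finitely generated ideals of $R_G = \K^G_0(k) = \pi_0 \K^G(k)$: namely $I$ and the image $F_G(IO) \subset R_G$ of the Hermitian augmentation ideal under the forgetful map. Finite generation of $I$ is classical, since it is generated by $x_1, \dots, x_t, y_1, \dots, y_t$. Finite generation of $F_G(IO)$ follows by combining Lemma~\ref{lemma:hermitianidealisfixedpointsofaugmentationideal}, which identifies $F_G(IO)$ with $I_+ \subset I$, with Lemma~\ref{lemma:hermitianidealgenerators}, which supplies an explicit generating set of $2^t - 1$ elements $\bm{x}^{\gamma} + \bm{y}^{\gamma}$.

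Next, Lemma~\ref{lemma:filtrationofaugmentationideal} produces integers $n, m \geq 1$ with $I^n \subset F_G(IO) \cdot R_G$ and $F_G(IO)^m \subset I$, so $I$ and $F_G(IO)$ define the same topology on $R_G$. Invoking the invariance principle from \cite[Section~7.3]{lurie18} then immediately yields the desired equivalence $\K^G(X)^{\wedge}_{IO} \simeq \K^G(X)^{\wedge}_{I}$.

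The main obstacle — really the only nontrivial point beyond the setup — is pinning down the correct form of Lurie's invariance principle, namely: if $J_1, J_2 \subset A$ are finitely generated ideals with $J_1^n \subset J_2$ and $J_2^m \subset J_1$, then the derived $J_1$- and $J_2$-completions of any module spectrum over an $E_\infty$-ring with $\pi_0 = A$ coincide. Concretely, derived completion at a finitely generated $J = (a_1, \dots, a_r)$ is the homotopy limit of the tower of Koszul-type cofibers $M/(a_1^k, \dots, a_r^k)$ for $k \geq 1$, and two such towers attached to ideals defining the same topology are pro-equivalent, so they yield the same limit after applying $\Map_{A}(-, M)$.
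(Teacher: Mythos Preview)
Your proposal is correct and broadly parallel to the paper's argument: both identify the $IO$-completion of $\K^G(X)$ with completion at the image ideal $I' = F_G(IO) \cdot R_G$ and then invoke the Filtration Lemma~\ref{lemma:filtrationofaugmentationideal} to compare the $I'$-adic and $I$-adic situations. The one substantive difference lies in how that last comparison is executed. The paper uses the Noetherian property of $R_G$ (justified via Corollary~\ref{corollary:ktheorysplittorusequivariant}) together with \cite[Proposition~7.3.6.6]{lurie18} to identify $\pi_i$ of each derived completion with the classical $I'$- or $I$-adic completion of $\K^G_i(X)$, and then applies the equality of topologies at that level. You instead appeal directly to the invariance of derived completion under passage to ideals defining the same topology, sketched via pro-equivalence of the associated Koszul towers. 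Your route is slightly more conceptual and sidesteps the Noetherian check; the paper's route is more hands-on and yields an explicit description of the homotopy groups of the completion as a byproduct. One caveat: your finite-generation arguments for $I$ and $F_G(IO)$ invoke Lemmas~\ref{lemma:hermitianidealisfixedpointsofaugmentationideal} and~\ref{lemma:hermitianidealgenerators}, which are specific to split tori, so --- like the paper's own proof --- your argument really only covers the case $G = T$, which is all that is used downstream.
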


\begin{proof}
Let $R = \K^G_0(k)$ and $RO = \GW^{G,[0]}_0(k)$.
Let $F: RO \ra R$ be the forgetful map and let $I'$ be the ideal of $\K^T_0(k)$ generated by $F(IO)$.
By the commutativity of
\begin{equation*}
    \begin{tikzcd}
        RO \arrow[r, "F"] \arrow[d]
            & R \arrow[d] \\
        \GW^{[0]}_0(k) \arrow[r, "F"]
            & \K_0(k),
    \end{tikzcd}
\end{equation*}
we have an inclusion $I' \subset I$, as $IO$ is the kernel of the left vertical map.
Hence there is a natural map $\K^T(X)^{\wedge}_{I'} \ra \K^T(X)^{\wedge}_{I}$.
We will show that this map is an equivalence of spectra by showing that it is an isomorphism on the homotopy groups.
By \cite[proposition 7.3.6.6]{lurie18} and the fact that $R$ is a Noetherian ring by corollary \ref{corollary:ktheorysplittorusequivariant}, it follows that $\pi_i \K^T(X)^{\wedge}_{I'}$ and $\pi_i \K^T(X)^{\wedge}_{I}$ coincide with the $I'$-adic and $I$-adic completions of $\K^T_i(X)$.
By the filtration lemma \ref{lemma:filtrationofaugmentationideal}, the natural map
\begin{equation*}
    \Cpl(\K^T_i(X), I') \lra \Cpl(\K^T_i(X), I)
\end{equation*}
is an isomorphism, as was to be shown.
\end{proof}

The following proposition, suggested to the author by Marc Levine, is useful for determining the completeness of certain modules we are about to consider.

\begin{proposition} \label{proposition:nilpotentclasses}
Let $X$ be a quasi-compact scheme with a line bundle $\mc{L}$.
Let $H_n^{\mc{L}}(E) \in \GW^{[n]}(X, \mc{L})$, where $E$ is a vector bundle on $X$ of rank $r$. 
Then $H_n^{\mc{L}}(E) - rH_n^{\mc{L}}(\OO)$ is nilpotent in the total Grothendieck-Witt ring of $X$. 
\end{proposition}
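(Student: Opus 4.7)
The plan is to combine a splitting-principle reduction with the projection formula for the hyperbolic map. Since $H_n^{\mc{L}}$ factors through $\K_0(X)$, the element in question equals $H_n^{\mc{L}}([E] - r[\OO])$, so it suffices to show that the hyperbolic image of any virtual rank-zero class is nilpotent in the total Grothendieck-Witt ring.

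First I would pass to the full flag bundle $f \colon \mathrm{Fl}(E) \to X$, on which $f^*E$ admits a complete filtration with line-bundle quotients $L_1,\dots,L_r$. The projective (and hence flag) bundle formula for GW-theory ensures that $f^*$ is injective on the total Grothendieck-Witt ring, so nilpotence can be checked after pullback. Writing $\ell_i = [L_i] - [\OO]$, the class becomes
\[
  \sum_{i=1}^{r} H_n^{f^*\mc{L}}(\ell_i),
\]
and since nilpotent elements of a commutative ring generate a nil ideal, it is enough to show that $H_n^{\mc{L}}(\ell)$ is nilpotent for a single line bundle $L$.

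For this I would iterate the projection formula $H(a)\cdot H(b) = H(a\cdot F(H(b))) = H(a(b+\bar b))$ to obtain $H(\ell)^k = H\bigl(\ell(\ell+\bar\ell)^{k-1}\bigr)$, where $\bar\ell = [L^{\vee}] - [\OO]$. Using the quadratic relation $\ell\bar\ell + \ell + \bar\ell = 0$ that comes from $[L]\cdot[L^{\vee}] = [\OO]$ (as in corollary \ref{corollary:ktheorysplittorusequivariant}), together with the self-duality $H(\ell) = H(\bar\ell)$ of hyperbolic classes, I would rewrite each such power as $H$ applied to a polynomial in the symmetric combination $s = \ell + \bar\ell$ and then argue that it lies in the kernel of $H_n^{\mc{L}}$ for sufficiently large $k$. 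The main obstacle is producing this explicit vanishing: $\ell$ itself need not be nilpotent in $\K_0(X)$ on a general quasi-compact scheme, so the argument must exploit the observation that only the symmetric part of a class under duality contributes to its hyperbolic image, together with the quadratic constraint above, to collapse the expressions $\ell(\ell+\bar\ell)^{k-1}$ into the kernel of $H$ after finitely many steps.
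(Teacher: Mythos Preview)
Your approach has a genuine gap at the step you yourself flag as the main obstacle. The quadratic relation $\ell\bar\ell + \ell + \bar\ell = 0$ and the identity $H(\ell) = H(\bar\ell)$ let you rewrite $H(\ell)^k = H\bigl(\ell\, s^{k-1}\bigr)$ with $s = \ell + \bar\ell$, and even (using $\tfrac12 \in \OO_X$) as $\tfrac12 H(s^k)$; but nothing in those identities forces $s^k$ or $\ell\, s^{k-1}$ into $\ker H$ for large $k$. The slogan that ``only the symmetric part contributes'' amounts to the equation $2H(\ell) = H(s)$, which produces no vanishing by itself.

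In fact your premise is mistaken: $\ell = [L] - [\OO]$ \emph{is} nilpotent in $\K_0(X)$ for $X$ quasi-compact. Choose a finite trivializing cover $\{U_i\}_{i \in I}$; then $\ell|_{U_i} = 0$, so by the localization sequence $\ell$ lies in the image of $\K_0(X\ \tn{on}\ Z_i)$ with $Z_i = X \setminus U_i$, and hence $\ell^{|I|}$ lies in the image of $\K_0(X\ \tn{on}\ \bigcap_i Z_i) = \K_0(X\ \tn{on}\ \emptyset) = 0$. Once $\ell$, and therefore $\bar\ell$ and $s$, are nilpotent in $\K_0$, the projection formula gives $H(\ell)^k = H(\ell\, s^{k-1}) = 0$ for $k$ large, and you are done --- without ever passing to the flag bundle, since the same covering argument applies to $[E] - r[\OO]$ directly.

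But notice that this is exactly the paper's argument, run in $\K_0$ and then pushed through $H$. The paper dispenses with the detour: it applies the finite-cover-plus-localization argument \emph{directly} in the total Grothendieck-Witt ring, observing that $H_n^{\mc{L}}(E) - rH_n^{\mc{L}}(\OO)$ already restricts to zero on each trivializing open $U_i$, hence lifts to $\GW^{[n]}(X\ \tn{on}\ Z_i, \mc{L})$, so its $|I|$-th power has empty support and vanishes. No splitting principle, no projection formula, no reduction to line bundles.
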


\begin{proof}
Let $\{U_i\}_{i \in I}$ be a cover of $X$ that trivializes $E$. 
Since $X$ is quasi-compact, we may assume $I$ finite.
For each $i \in I$, let $Z_i = X - U_i$ and let $a_i: U_i \ra X$ be the inclusion.
Then there are localization sequences
\begin{equation*}
    \GW^{[n]}(X\ \tn{on}\ Z_i, \mc{L}) \lra \GW^{[n]}(X,\mc{L}) \lra \GW^{[n]}(U_i,\mc{L})
\end{equation*}
of commutative ring spectra.
Note $H_n^{\mc{L}}(E) - rH_n^{\mc{L}}(\OO)$ is zero in $\GW^{[n]}_0(U_i,\mc{L})$ for all $i \in I$.
Thus for $i \in I$, let $[E_i, q_i] \in \GW^{[n]}(X, \mc{L})$ be a preimage of $H_n^{\mc{L}}(E) - rH_n^{\mc{L}}(\OO)$ with support in $Z_i$.
The product
\begin{equation*}
    \prod_{i \in I} [E_i, q_i]
\end{equation*}
has support in the intersection of all the $Z_i$, which is empty, whence the product vanishes.
\end{proof}

Now let $k$ be a field of characteristic not two.    
Let $T$ be a split torus of rank $t$ over $k$.
Let $RO = \GW^{T,[0]}_0(k)$ and $R = \K^T_0(k)$ with the Hermitian augmentation ideal $IO \subset RO$ as in definition \ref{def:hermitianaugmentationideal} and the augmentation ideal $I \subset R$, respectively. 
By lemma \ref{lemma:hermitianidealgenerators}, $IO$ is a finitely generated ideal.
Let $x_1, \dots, x_q \in RO$ be a set of generators of $IO$.
Let $X$ be a regular scheme with a trivial $T$-action.
Let $\mc{A} = \Perf^T(X)^{[\mc{L}]}$ be the pretriangulated dg category of perfect complexes of $T$-equivariant $\OO_X$-modules with duality induced by a $T$-equivariant line bundle $\mc{L}$ on $X$. 
For $r \in \N$, let $X_r = (\P_X^{2r})^t$ and let $X_{\Bo} = \colim_r X_r$, where `Bo' stands for `Borel construction'. 
Let $\mc{L}_r$ be the pullback of $\mc{L}$ along the projection $X_r \ra X$.
Objects of $\mc{A}$ correspond to complexes of $\Z^t$-graded vector bundles on $X$. 

Let $\K_{\Bo}(X) = \lim_{r} \K(X_r)$ and $\GW^{[n]}_{\Bo}(X, \mc{L}) = \lim_r \GW^{[n]}(X_r, \mc{L}_r)$. 
For each $r \in \N$, there is a natural map $\K^T(X)^{\wedge}_{IO} \ra \K(X_r)$ since there is a canonical functor $\Perf^T(X) \ra \Perf(X_r)$ and $\K(X_r)$ is $IO$-complete by the derived filtration lemma \ref{lemma:derivedfiltrationlemma}.
Hence there is a natural map $\theta: \K^T(X)^{\wedge}_{IO} \ra \K_{\Bo}(X)$, which shows that $\K_{\Bo}(X)$ is $IO$-complete, cf. \cite[theorem 1.4(i)]{tabuada21}.
Furthermore, $\theta$ is an equivalence by lemma \ref{lemma:derivedfiltrationlemma} and \cite[theorem 1.4(ii)]{tabuada21}.

\begin{lemma} \label{lemma:gwofprojectivespaceiscomplete}
For $r \in \N$, $n \in \Z$ and $\mc{L}'$ a line bundle on $X_r$, $\GW^{[n]}(X_r, \mc{L}')$ is $IO$-complete in the sense of \cite[definition 7.3.1.1]{lurie18}.
\end{lemma}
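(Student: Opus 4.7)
The plan is to reduce $IO$-completeness to a nilpotency statement on the generators of $IO$ and then leverage proposition \ref{proposition:nilpotentclasses}. By lemma \ref{lemma:hermitianidealgenerators} the ideal $IO$ is finitely generated by the elements $H_0(\bm{x}^\gamma)$ for $\gamma \in \{0,1\}^t - \{0\}^t$, so by the standard criterion for finitely generated ideals in the derived setting \cite[section 7.3]{lurie18} it suffices to show that $M = \GW^{[n]}(X_r, \mc{L}')$ is $a$-complete for each such generator $a$. In turn, it is enough to check that $a$ acts nilpotently on $\pi_\ast M$, for then the tower $\cdots \xrightarrow{a} M \xrightarrow{a} M$ is Mittag-Leffler with vanishing $\lim$ and $\limone$, so its derived inverse limit is zero and $a$-completeness follows.

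The $RO$-action on $M$ is induced by the Borel construction $\sPerf^T(X) \to \sPerf(X_r)$, under which the $T$-equivariant line bundle $\OO_X(e_i)$ corresponds to the hyperplane line bundle $\OO_{X_r}(h_i) = p_i^\ast \OO_{\P^{2r}}(1)$ on the $i$-th factor of $(\P^{2r})^t$. Writing $h_i = [\OO_{X_r}(h_i)] - 1 \in \K_0(X_r)$ and $\bm{h}^\gamma = \prod_{i: \gamma_i = 1} h_i$, the action of a generator $a = H_0(\bm{x}^\gamma)$ on $\pi_\ast M$ is multiplication by $H_0(\bm{h}^\gamma) \in \GW^{[0]}_0(X_r)$, so it suffices to show that each $H_0(\bm{h}^\gamma)$ is nilpotent in $\GW^{[0]}_0(X_r)$.

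To achieve this, I would combine proposition \ref{proposition:nilpotentclasses}, which via the rank one bundle $\OO_{X_r}(h_i)$ yields that $H_0(h_i) = H_0(\OO_{X_r}(h_i)) - H_0(\OO_{X_r})$ is nilpotent in the total Grothendieck-Witt ring of $X_r$, with the identity
\[
    H_0(u) \cdot H_0(v) = H_0\bigl(u \cdot (v + v^\vee)\bigr), \quad u, v \in \K_0(X_r),
\]
which follows from the $\GW^{[0]}$-linearity of the hyperbolic map together with $F_0 \circ H_0 = 1 + \vee$. Iterating this identity gives
\[
    H_0(\bm{h}^\gamma)^N = H_0\bigl(\bm{h}^\gamma \cdot (\bm{h}^\gamma + (\bm{h}^\gamma)^\vee)^{N-1}\bigr).
\]
The projective bundle theorem in $\K$-theory yields $h_i^{2r+1} = 0$ in $\K_0(X_r)$, so $\bm{h}^\gamma$ is nilpotent; the dual $(\bm{h}^\gamma)^\vee$ is likewise nilpotent because $[\OO_{X_r}(-h_i)] = (1+h_i)^{-1}$ is a polynomial in $h_i$. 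Hence $\bm{h}^\gamma + (\bm{h}^\gamma)^\vee$ is nilpotent in $\K_0(X_r)$, and the right-hand side of the displayed equation vanishes for $N$ sufficiently large.

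The main obstacle is exactly this passage from $\K_0$ to $\GW^{[0]}_0$: products of hyperbolic classes pick up the duality involution, so nilpotency in $\K$-theory does not transfer to Grothendieck-Witt theory by formal multiplicativity. The identity for $H_0(u) \cdot H_0(v)$ above is what makes the transfer possible, and once it is in hand the finite list of generators supplied by lemma \ref{lemma:hermitianidealgenerators} yields the required $IO$-completeness of $\GW^{[n]}(X_r, \mc{L}')$.
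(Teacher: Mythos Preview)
Your proof is correct and follows the same overall strategy as the paper: reduce $IO$-completeness, via \cite[proposition 7.3.2.1]{lurie18}, to the vanishing of the tower $\cdots \xrightarrow{a} M \xrightarrow{a} M$ for each generator $a$ of $IO$, and then show that each such $a$ acts through a nilpotent element of $\GW^{[0]}_0(X_r)$.

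Where you diverge from the paper is in how you establish that nilpotence. The paper simply notes that the generators act through classes of the form $H_0(\OO_{X_r}(\lambda)) - H_0(\OO_{X_r})$ and invokes proposition~\ref{proposition:nilpotentclasses} directly, i.e.\ a finite-trivializing-cover argument. You instead prove nilpotence of $H_0(\bm{h}^\gamma)$ by the projection-formula identity $H_0(u)\,H_0(v) = H_0\bigl(u\,(v+v^\vee)\bigr)$ together with the projective bundle relation $h_i^{2r+1} = 0$ in $\K_0(X_r)$. This is a perfectly valid alternative and even yields explicit nilpotency exponents; but notice that once you run the iteration argument you no longer use the output of proposition~\ref{proposition:nilpotentclasses} anywhere, so your citation of it is redundant. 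Conversely, the paper's route via proposition~\ref{proposition:nilpotentclasses} applies to any line bundle on any quasi-compact scheme without appealing to special relations in $\K_0$, which is precisely why the author isolated it as a standalone proposition.
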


\begin{proof}
By \cite[proposition 7.3.2.1]{lurie18}, it suffices to show that the limit of the tower
\begin{equation*}
    \dots \stlra{\cdot x_j} \GW^{[n]}(X_r, \mc{L}') \stlra{\cdot x_j} \GW^{[n]}(X_r, \mc{L}')
\end{equation*}
vanishes for each $1 \leq j \leq q$. 
The elements $x_j$ are of the form $H_0(k(\lambda) - k)$ so they act on $\GW^{[n]}(X_r, \mc{L}')$ as $H_0(\OO_{X_r}(\lambda)) - H_0(\OO) \in \GW^{[0]}_0(X_r)$. 
By proposition \ref{proposition:nilpotentclasses}, these classes are nilpotent, so the limit vanishes and the result follows.
\end{proof}

\begin{corollary} \label{corollary:borelconstructioniscomplete}
For $r \in \N$ and $n \in \Z$, $\GW^{[n]}_{\Bo}(X, \mc{L})$ is $IO$-complete in the sense of \cite[definition 7.3.1.1]{lurie18}.
In particular, there is a natural map $\gamma: \GW^{T,[n]}(X,\mc{L})^{\wedge}_{IO} \ra \GW^{[n]}_{\Bo}(X, \mc{L})$. 
\end{corollary}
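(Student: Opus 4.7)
The plan is to deduce the $IO$-completeness of $\GW^{[n]}_{\Bo}(X, \mc{L})$ from lemma \ref{lemma:gwofprojectivespaceiscomplete} by a closure argument, and then to produce the natural map $\gamma$ via the universal property of derived completion. First I would invoke the standard fact that the full subcategory of $IO$-complete spectra is a reflective localization of the category of $RO$-module spectra (see \cite[section 7.3]{lurie18}), and is therefore closed under arbitrary limits. Since $\GW^{[n]}_{\Bo}(X, \mc{L})$ is by definition the sequential limit $\lim_r \GW^{[n]}(X_r, \mc{L}_r)$ of spectra that are $IO$-complete by lemma \ref{lemma:gwofprojectivespaceiscomplete}, the Borel Grothendieck-Witt spectrum is itself $IO$-complete.

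For the second assertion, I would imitate the $\K$-theoretic construction described in the paragraph just above lemma \ref{lemma:gwofprojectivespaceiscomplete}. For each $r \in \N$ there is a dg form functor
\begin{equation*}
    \sPerf^T(X)^{[\mc{L}]} \lra \sPerf(X_r)^{[\mc{L}_r]}
\end{equation*}
sending a $T$-equivariant perfect complex $\F = \bigoplus_{\lambda \in \Z^t} \F_{\lambda}$ to $\bigoplus_{\lambda} \F_{\lambda} \otimes \OO_{X_r}(\lambda)$, where $\OO_{X_r}(\lambda) = \bigotimes_{i} \pr_i^* \OO_{\P^{2r}}(\lambda_i)$ realizes the character lattice of $T$ via the line bundles on the Borel approximation $X_r = (\P_X^{2r})^t$. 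The inherited symmetric form on the target should make this functor duality-preserving. These functors are compatible as $r$ varies, so on passing to Grothendieck-Witt spectra they assemble into a map $\GW^{T,[n]}(X, \mc{L}) \lra \GW^{[n]}_{\Bo}(X, \mc{L})$. Since the target is $IO$-complete, this map factors uniquely through the $IO$-adic completion of its source and thereby supplies the natural map $\gamma$.

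I expect the first step to be essentially formal. The main technical point is verifying duality-preservation of the form functor, i.e.\ that $\mc{L}_r$, taken to be the pullback of $\mc{L}$ along $X_r \ra X$, is the correct choice to render $\F \mapsto \bigoplus_\lambda \F_\lambda \otimes \OO_{X_r}(\lambda)$ strictly compatible with the dualities on source and target. This hinges on the fact that $T$ acts trivially on $X$, so that the weight decomposition of $\mc{L}$ is concentrated in multi-degree $0$ and the twists $\OO_{X_r}(\lambda)$ and $\OO_{X_r}(-\lambda)$ pair correctly under the duality. Once this compatibility is in hand, the remainder of the argument is a direct application of the completion machinery of \cite[section 7.3]{lurie18}.
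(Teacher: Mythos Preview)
Your proposal is correct and follows essentially the same approach as the paper: both deduce $IO$-completeness of $\GW^{[n]}_{\Bo}(X,\mc{L})$ from lemma \ref{lemma:gwofprojectivespaceiscomplete} via closure of $IO$-complete objects under limits (the paper cites \cite[proposition 7.3.1.4]{lurie18} directly), and then obtain $\gamma$ from the universal property of completion. Your treatment of $\gamma$ is more explicit than the paper's one-line ``hence there is a canonical map'', spelling out the underlying dg form functor; one small caveat is that a $T$-equivariant line bundle on $X$ with trivial $T$-action need not be concentrated in weight $0$, so the duality compatibility you sketch really uses (or forces) that $\mc{L}$ carries the trivial equivariant structure, which is also the case the paper ultimately applies in theorem \ref{theorem:ascompletionfortrivialactionofsplittorus}.
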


\begin{proof}
The spectrum $\GW^{[n]}_{\Bo}(X, \mc{L}) = \lim_r \GW^{[n]}(X_r, \mc{L}_r)$ is a limit of $IO$-complete spectra, which is $IO$-complete by \cite[proposition 7.3.1.4]{lurie18}.
Hence there is a canonical map $\gamma: \GW^{T,[n]}(X,\mc{L})^{\wedge}_{IO} \ra \GW^{[n]}_{\Bo}(X, \mc{L})$. 
\end{proof}

\subsection{The completion theorem for trivial actions}
    \label{subsection:thecompletiontheoremfortrivialactions}
    
We keep the notation of the previous section.
For each $r \in \N$, there is a commutative diagram of Bott sequences
\begin{equation*}
    \begin{tikzcd}
        \GW^{T,[n]}(X, \mc{L}) \arrow[r] \arrow[d]
            & \K^T(X) \arrow[r] \arrow[d]
                & \GW^{T,[n+1]}(X,\mc{L}) \arrow[d] \\
        \GW^{[n]}(X_r,\mc{L}_r) \arrow[r]
            & \K(X_r) \arrow[r]
                & \GW^{[n+1]}(X_r,\mc{L}_r)
    \end{tikzcd}
\end{equation*}
and it follows that there is a commutative diagram
\begin{equation*}
    \begin{tikzcd}
        \GW^{T,[n]}(X, \mc{L}) \arrow[r] \arrow[d]
            & \K^T(X) \arrow[r] \arrow[d]
                & \GW^{T,[n+1]}(X, \mc{L}) \arrow[d] \\
        \GW^{[n]}_{\Bo}(X, \mc{L}) \arrow[r]
            & \K_{\Bo}(X) \arrow[r]
                & \GW^{[n+1]}_{\Bo}(X, \mc{L})
    \end{tikzcd}
\end{equation*}
yielding a map of long exact sequences
\begin{equation*}
    \begin{tikzcd}[column sep=small]
        \pi_i \GW^{T,[n]}(X)^{\wedge}_{IO} \arrow[r] \arrow[d]
            & \pi_i \K^T(X)^{\wedge}_{IO} \arrow[r] \arrow[d]
                & \pi_i \GW^{T,[n+1]}(X)^{\wedge}_{IO} \arrow[r] \arrow[d]
                    & \pi_{i-1} \GW^{T,[n]}(X)^{\wedge}_{IO} \arrow[d] \\
        \pi_i \GW^{[n]}_{\Bo}(X, \mc{L}) \arrow[r]
            & \pi_i \K_{\Bo}(X) \arrow[r]
                & \pi_i \GW^{[n+1]}_{\Bo}(X, \mc{L}) \arrow[r]
                    & \pi_{i-1} \GW^{[n]}_{\Bo}(X, \mc{L})
    \end{tikzcd}
\end{equation*}
We will now show by a slightly modified version of Karoubi induction that all the vertical arrows are isomorphisms.

\begin{proposition} \label{proposition:admissiblegadgetsplittorusmittagleffler}
The tower
\begin{equation*}
    \{ \GW^{[n]}_{i+1}(X_r, \mc{L}_r) \}_r
\end{equation*}
satisfies the Mittag-Leffler condition.
\end{proposition}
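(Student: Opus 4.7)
The plan is to show that every transition map
\[\GW^{[n]}_{i+1}(X_{r+1}, \mc{L}_{r+1}) \lra \GW^{[n]}_{i+1}(X_r, \mc{L}_r)\]
induced by the closed immersion $X_r \hookrightarrow X_{r+1}$ (from the $t$-fold product of the linear inclusion $\P^{2r} \hookrightarrow \P^{2r+2}$) is surjective. Surjectivity of every transition map trivially implies the Mittag-Leffler condition.

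For the case $t = 1$, this should follow from the projective bundle formula for Grothendieck-Witt theory of even-dimensional projective spaces, due to Walter (with later treatments by Schlichting). For a regular scheme $Y$ with line bundle $\mc{M}$, this formula gives a split direct sum decomposition of $\GW^{[n]}(\P^{2r}_Y, \pr^*\mc{M})$ into finitely many copies of shifts of $\GW^{[*]}(Y, \mc{M})$ and $\K(Y)$, with generators built from the twisting sheaves $\OO(-j)$ for $0 \leq j \leq 2r$, suitably symmetrized to produce well-defined GW classes. Pullback along the linear inclusion $\P^{2r} \hookrightarrow \P^{2r+2}$ preserves each $[\OO(-j)]$, so the induced map carries the PBF generators of the larger projective space onto those of the smaller one, and is therefore surjective.

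For general $t$, I would induct by factoring the inclusion $X_r \hookrightarrow X_{r+1}$ through the intermediate scheme $(\P^{2r}_X)^{t-1} \times_X \P^{2r+2}_X$. The first leg uses the linear inclusion only in the last projective factor, and the pullback is surjective by the $t = 1$ case applied relatively over the regular base $(\P^{2r}_X)^{t-1}$. The second leg uses linear inclusions in the first $t-1$ factors, and the pullback is surjective by the inductive hypothesis applied over the base $\P^{2r+2}_X$. A composition of surjections is surjective.

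The main obstacle is to ensure that the PBF applies in the required form, both with the twisted duality $\mc{L}_r$ and in the relative setting over a non-trivial regular base. Since $\mc{L}_r$ is pulled back from $X$ along the projection $X_r \to X$, it carries no twist in any projective factor, so only the standard PBF with a pulled-back auxiliary line bundle is required. The relative PBF over regular bases is likewise standard, and functoriality of the PBF in the base ensures the iterated decompositions are compatible.
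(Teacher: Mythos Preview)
Your approach is correct and is almost certainly the intended one. The paper does not actually give a proof; it defers entirely to \cite[proposition 8.2.2]{rohrbach21}, the author's thesis. Given that the very next lemma in the paper invokes \cite[main theorem A]{rohrbach22}, which is precisely the projective bundle formula for $\GW$ of even-dimensional projective spaces, it is clear that the thesis argument proceeds exactly as you outline: the PBF gives an explicit additive decomposition of $\GW^{[n]}_{i+1}(\P^{2r}_Y, \pr^*\mc{M})$ as one copy of $\GW^{[n]}_{i+1}(Y, \mc{M})$ together with $r$ copies of $\K_{i+1}(Y)$, indexed by the centred exceptional collection $\OO(-r), \ldots, \OO(r)$; the linear restriction $\P^{2r+2} \to \P^{2r}$ carries $\OO(j)$ to $\OO(j)$, so the summands for $|j| \leq r$ map identically and the transition map is surjective. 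Your factorisation through the intermediate scheme handles the induction on $t$ cleanly.

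Two minor sharpenings. First, in the untwisted even-dimensional case you are using, only a single unshifted $\GW^{[n]}(Y,\mc{M})$ summand appears, not ``shifts of $\GW^{[*]}$''; the extra shifts you allude to arise only for odd-dimensional projective space or for a nontrivial $\OO(1)$-twist, neither of which occurs here. Second, you should state explicitly that the map on each matching summand is the identity on $\GW^{[n]}_{i+1}(Y,\mc{M})$ or $\K_{i+1}(Y)$ (not merely a surjection), which is what the compatibility $\iota^*\OO(j) = \OO(j)$ actually gives; this is what makes surjectivity of the full transition map immediate.
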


\begin{proof}
The proof can be directly adapted from \cite[proposition 8.2.2]{rohrbach21}
\end{proof}

\begin{lemma} \label{lemma:ascompletionforwittgroups}
The natural map
\begin{equation*}
    \pi_{i} \GW^{T,[n]}(X, \mc{L})^{\wedge}_{IO} \lra \pi_i\GW^{[n]}_{\Bo}(X, \mc{L})
\end{equation*}
is an isomorphism for all $i \leq -2$ and $n \in \Z$.
\end{lemma}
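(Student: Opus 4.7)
The plan is a diagram chase in the ladder of Bott long exact sequences displayed just before the lemma, exploiting the K-theoretic comparison isomorphism established in the discussion preceding Lemma \ref{lemma:gwofprojectivespaceiscomplete} to reduce the GW-theoretic statement for $i \leq -2$ to a Witt-theoretic comparison that is automatic under the trivial action hypothesis.

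Since $X$ is regular and $T$ acts trivially, Proposition \ref{proposition:orthogonaldecompositionsplittoriequivariantperfectcomplexes} gives $\K^T_i(X) \cong \bigoplus_{\lambda \in \Z^t} \K_i(X) = 0$ for $i < 0$. By Lurie's \cite[Proposition 7.3.6.6]{lurie18}, together with the Noetherianity of $RO$ and the finite generation of $IO$ (Lemma \ref{lemma:hermitianidealgenerators}), $\pi_i \K^T(X)^{\wedge}_{IO} = 0$ for $i \leq -1$, and via the known K-theoretic comparison iso also $\pi_i \K_{\Bo}(X) = 0$ in this range. Substituting this vanishing into both rows of the ladder, the Bott long exact sequences yield compatible connecting isomorphisms
\begin{equation*}
    \pi_i\, \GW^{T,[n+1]}(X,\mc{L})^{\wedge}_{IO} \xrightarrow{\sim} \pi_{i-1}\, \GW^{T,[n]}(X,\mc{L})^{\wedge}_{IO}
\end{equation*}
and analogously on the Borel side, for every $i \leq -1$ and $n \in \Z$. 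Iterating reduces the comparison for arbitrary $i \leq -2$ to the comparison at $\pi_{-1}$, for all shifted values of $n$.

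At this $\pi_{-1}$ level, a further application of the Bott sequence identifies $\pi_{-1}\, \GW^{T,[m]}(X,\mc{L})$ with the Witt group $\W^{T,[m+1]}(X,\mc{L})$, which equals $\W^{[m+1]}(X,\mc{L})$ by the invariance of Witt groups under trivial torus actions (as invoked in the proof of Lemma \ref{lemma:hermitianidealisfixedpointsofaugmentationideal}); since $IO$ then acts on this through $\alpha$ and hence as zero, its derived $IO$-completion is trivial. On the Borel side, Proposition \ref{proposition:admissiblegadgetsplittorusmittagleffler} annihilates the $\lim^1$-term, and the projective bundle formula for Witt groups collapses $\lim_r \W^{[m+1]}(X_r,\mc{L}_r)$ to $\W^{[m+1]}(X,\mc{L})$, since each approximating factor $\P^{2r}$ is even-dimensional. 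Under these identifications the induced comparison map is the identity.

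The main technical obstacle is the reconciliation between derived and classical $IO$-completion at the Witt-theoretic stage: one needs the derived completion to both preserve the relevant Bott cofiber sequences and to be trivial on $IO$-annihilated modules. Both points rest on Noetherianity of $RO$, finite generation of $IO$, and Lurie's formalism from \cite[section 7.3]{lurie18}. A secondary subtlety is the Witt-theoretic projective bundle formula, whose application is justified precisely because each approximating factor $\P^{2r}$ of $X_r$ is of even dimension.
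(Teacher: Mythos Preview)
Your route differs from the paper's: instead of computing both sides directly at each $i \leq -2$ as Witt groups (via Mittag-Leffler on the Borel side and the explicit tower description of $(-)^{\wedge}_{x_j}$ from \cite[proposition 7.3.2.1]{lurie18} on the completed side), you collapse the Bott ladder using negative-degree vanishing of $\K^T(X)^{\wedge}_{IO}$ and push everything to $\pi_{-1}$. The idea is reasonable, but there is a genuine gap as written.

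The problem is your appeal to Noetherianity of $RO$. The paper never establishes this, and in fact $RO = \GW^{T,[0]}_0(k)$ is \emph{not} Noetherian for general $k$: already $\W(\Q)$ contains an ideal that is an infinite direct sum of finite cyclic groups, so $\GW_0(\Q)$ and hence $RO$ fail to be Noetherian. Your invocations of \cite[proposition 7.3.6.6]{lurie18} are therefore unjustified, both for the vanishing $\pi_i \K^T(X)^{\wedge}_{IO} = 0$ in degrees $i \leq -1$ and, more seriously, for the claim that derived $IO$-completion is ``trivial on $IO$-annihilated modules'' at the $\pi_{-1}$ stage. The latter is the real issue: even though $IO$ kills $\pi_{-1}\GW^{T,[m]}(X,\mc{L}) \cong \W^{[m+1]}(X,\mc{L})$, the group $\pi_{-1}\bigl(\GW^{T,[m]}(X,\mc{L})^{\wedge}_{x_j}\bigr)$ can receive a $\lim^1$ contribution from the $x_j$-tower on $\pi_0\,\GW^{T,[m]}(X,\mc{L})$, where $x_j$ certainly does \emph{not} act as zero. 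The paper sidesteps this by staying in the range $r \leq -2$, where both $\lim$ and $\lim^1$ in the Milnor sequence for $T(\GW^{T,[n]}(X,\mc{L}))$ involve only homotopy groups in degrees $\leq -1$, on which every $x_j$ acts as zero because it is hyperbolic. Your reduction to $\pi_{-1}$ lands you precisely on the boundary where this clean argument breaks down. The gap is repairable (the $\lim^1$ term maps to zero under $T \to \GW^{T,[m]}$, and the $\K$-theory vanishing follows from surjectivity of the transition maps $\K^T_0(X)/x_j^{n+1} \twoheadrightarrow \K^T_0(X)/x_j^n$), but filling it requires exactly the tower analysis the paper carries out, so the detour through $\pi_{-1}$ ultimately buys nothing.
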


\begin{proof}
Let $i \leq -2$ and $n \in \Z$.
Note that $\GW^{[n]}_{\Bo}(X, \mc{L}) = \lim_r \GW^{[n]}(X_r, \mc{L}_r)$.
Hence there is a Milnor exact sequence
\begin{equation*}
    0 \lra \limone_r \GW^{[n]}_{i+1}(X_r, \mc{L}_r) \lra \pi_i \GW^{[n]}_{\Bo}(X, \mc{L}) \lra \lim_r \GW^{[n]}_i(X_r, \mc{L}_r) \lra 0
\end{equation*}
As the tower $\{ \GW^{[n]}_{i+1}(X_r, \mc{L}_r) \}_r$ satisfies the Mittag-Leffler condition by proposition \ref{proposition:admissiblegadgetsplittorusmittagleffler}, the $\limone$-term vanishes and the natural map
\begin{equation*}
    \pi_i \GW^{[n]}_{\Bo}(X, \mc{L}) \lra \lim_r \GW^{[n]}_i(X_r, \mc{L}_r)    
\end{equation*}
is an isomorphism. 
By \cite[main theorem A]{rohrbach22}, $\GW^{[n]}_i(X_r, \mc{L}_r) \cong \W^{[n-i]}(X, \mc{L})$, yielding $\pi_i \GW^{[n]}_{\Bo}(X, \mc{L}) \cong \W^{[n-i]}(X, \mc{L})$. 
Note that $\GW^{T,[n]}_i(X, \mc{L}) \cong \W^{[n-i]}(X, \mc{L})$ by corollary \ref{corollary:gwtheorysplittorusequivariant} combined with \cite[proposition 6.3]{schlichting17}. 
It follows that the natural map $\GW^{T,[n]}_i(X, \mc{L}) \ra \pi_i \GW^{[n]}_{\Bo}(X, \mc{L})$ is an isomorphism.
Now it suffices to show that the natural map $\GW^{T,[n]}_i(X) \ra \pi_{i} \GW^{T,[n]}(X)^{\wedge}_{IO}$ is an isomorphism by two-out-of-three for isomorphisms.

Fix $1 \leq j \leq q$. 
By \cite[proposition 7.3.2.1]{lurie18}, 
\begin{equation*}
    \GW^{T,[n]}(X, \mc{L})^{\wedge}_{x_j} \simeq \cofib(T(\GW^{T,[n]}(X), \mc{L}) \ra \GW^{T,[n]}(X), \mc{L}),
\end{equation*}
where $T(\GW^{T,[n]}(X), \mc{L})$ is the limit of the tower
\begin{equation*}
    \dots \stlra{\cdot x_j} \GW^{T,[n]}(X, \mc{L}) \stlra{\cdot x_j} \GW^{T,[n]}(X, \mc{L}).
\end{equation*}
For $r \in \Z$, let $A_r$ be the corresponding tower
\begin{equation*}
    \dots \stlra{\cdot x_j} \pi_r \GW^{T,[n]}(X, \mc{L}) \stlra{\cdot x_j} \pi_r \GW^{T,[n]}(X, \mc{L})
\end{equation*}
of homotopy groups.
Then there is a Milnor exact sequence
\begin{equation*}
    0 \lra \limone A_{r+1} \lra \pi_r T(\GW^{T,[n]}(X), \mc{L}) \lra \lim A_r \lra 0.
\end{equation*}
Since $x_j$ is hyperbolic and $\pi_r \GW^{T,[n]}(X, \mc{L}) \cong \W^{[n-r]}(X, \mc{L})$ whenever $r \leq -1$, multiplication by $x_j$ is the zero map on $\pi_r \GW^{T,[n]}(X, \mc{L})$.
It follows that for $r \leq -2$ the $\limone$-term and the limit term of the Milnor exact sequence vanish, and $\pi_r T(\GW^{T,[n]}(X, \mc{L}))$ vanishes as well.
Using the long exact sequence
\begin{equation*}
    \begin{tikzcd}
        \pi_{r}T(\GW^{T,[n]}(X, \mc{L})) \arrow[r]  
            & \pi_r \GW^{T,[n]}(X, \mc{L}) \arrow[r]
                & \pi_r \GW^{T,[n]}(X, \mc{L})^{\wedge}_{x_j} \arrow[d] \\
        {}
            & {}
                & \pi_{r-1}T(\GW^{T,[n]}(X, \mc{L}))
    \end{tikzcd}
\end{equation*}
we see that $\pi_r \GW^{T,[n]}(X, \mc{L}) \ra \pi_r \GW^{T,[n]}(X, \mc{L})^{\wedge}_{x_j}$ is an isomorphism for all $r \leq -2$. 
By induction on the number of generators $q$ using \cite[proposition 7.3.3.2]{lurie18}, it follows that 
\begin{equation*}
    \pi_i \GW^{T,[n]}(X, \mc{L}) \lra \pi_i \GW^{T,[n]}(X, \mc{L})^{\wedge}_{IO}
\end{equation*}
is an isomorphism, as was to be shown.
\end{proof}

Now we prove our main result, Atiyah-Segal completion for the $T$-equivariant Hermitian $K$-theory of $X$.

\begin{theorem} \label{theorem:ascompletionfortrivialactionofsplittorus}
The natural map
\begin{equation*}
    \pi_{i} \GW^{T,[n]}(X)^{\wedge}_{IO} \lra \pi_i \GW^{[n]}_{\Bo}(X)
\end{equation*}
is an isomorphism for all $i, n \in \Z$.
\end{theorem}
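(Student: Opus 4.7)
The plan is to run a five-lemma argument (a mild variant of Karoubi induction) on the map of Bott long exact sequences displayed just above the theorem, inducting on the homological degree $i$. Writing $f_i^{[n]}$ for the comparison map $\pi_i \GW^{T,[n]}(X)^{\wedge}_{IO} \to \pi_i \GW^{[n]}_{\Bo}(X)$ and $g_i$ for its $\K$-theoretic analogue $\pi_i \K^T(X)^{\wedge}_{IO} \to \pi_i \K_{\Bo}(X)$, the base case is provided by Lemma \ref{lemma:ascompletionforwittgroups}, while the $\K$-theoretic comparison supplies the ancillary isomorphisms needed to close each inductive step.

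First I would record the two ingredients. The map $g_i$ is an isomorphism for every $i \in \Z$, by combining the derived filtration lemma \ref{lemma:derivedfiltrationlemma} with \cite[theorem 1.4(ii)]{tabuada21}, as observed just above Lemma \ref{lemma:gwofprojectivespaceiscomplete}. Lemma \ref{lemma:ascompletionforwittgroups} then gives that $f_i^{[n]}$ is an isomorphism for all $i \leq -2$ and all $n \in \Z$; this is the base case of the induction.

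Next I would carry out the induction on $i$. Assume that $f_j^{[m]}$ is an isomorphism for all $j < i$ and all $m \in \Z$, and fix $n \in \Z$. The five consecutive vertical maps in the displayed diagram whose middle entry is $f_i^{[n]}$ are
\[
g_i, \quad f_i^{[n]}, \quad f_{i-1}^{[n-1]}, \quad g_{i-1}, \quad f_{i-1}^{[n]};
\]
the first and fourth are isomorphisms by the $\K$-theoretic input, and the third and fifth are isomorphisms by the induction hypothesis. The five lemma then yields that $f_i^{[n]}$ is an isomorphism, and since $n$ was arbitrary this closes the induction step. Iterating from $i = -1$ upward gives the statement for all $i, n \in \Z$.

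The substantive obstacle has already been handled in the preceding subsection, namely the base case $i \leq -2$ of Lemma \ref{lemma:ascompletionforwittgroups}, which rests on the nilpotence of hyperbolic classes (Proposition \ref{proposition:nilpotentclasses}) and on the Mittag-Leffler property of the tower $\{\GW^{[n]}_{i+1}(X_r, \mc{L}_r)\}_r$ (Proposition \ref{proposition:admissiblegadgetsplittorusmittagleffler}). Given those, together with Tabuada's $\K$-theoretic completion theorem and the derived filtration lemma, the five-lemma induction above is routine and I do not anticipate any additional subtlety.
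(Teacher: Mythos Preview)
Your overall strategy coincides with the paper's: induct on $i$ with base case $i\le -2$ supplied by Lemma~\ref{lemma:ascompletionforwittgroups}, and run the inductive step along the map of Bott long exact sequences using the $\K$-theoretic comparison. The gap is in the diagram chase. In your window $g_i,\,f_i^{[n]},\,f_{i-1}^{[n-1]},\,g_{i-1},\,f_{i-1}^{[n]}$ the map $f_i^{[n]}$ sits in the \emph{second} slot, not the middle, and the five lemma only controls the middle map. If instead you center the window at $f_i^{[n]}$, the leftmost entry becomes $f_i^{[n-1]}$, which lies in the same homological degree $i$ and is therefore not covered by the induction hypothesis. So a single five-lemma shot is unavailable; knowing positions $1,3,4,5$ are isomorphisms does not force position $2$ to be one.

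The paper closes this with the standard two-step Karoubi trick, which is exactly what is missing from your write-up. First apply the surjectivity four lemma to $g_j,\,f_j^{[n+1]},\,f_{j-1}^{[n]},\,g_{j-1}$: since $g_j$ and $f_{j-1}^{[n]}$ are epimorphisms and $g_{j-1}$ is a monomorphism, $f_j^{[n+1]}$ is surjective, hence $f_j^{[m]}$ is surjective for \emph{every} shift $m$. Then feed this back into the injectivity four lemma on $f_j^{[n]},\,g_j,\,f_j^{[n+1]},\,f_{j-1}^{[n]}$: now $f_j^{[n]}$ is epi by the first step, while $g_j$ and $f_{j-1}^{[n]}$ are mono, so $f_j^{[n+1]}$ is injective. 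The essential point you glossed over is that surjectivity must be established for all shifts at degree $j$ before it can be used as input to obtain injectivity.
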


\begin{proof}
The proof is by induction on $i$.
By lemma \ref{lemma:ascompletionforwittgroups}, the statement holds for $i \leq -2$. 
Now let $j \in \Z$ and assume the statement holds for all $i < j$.
For each $n \in \Z$, there is a map of long exact sequences
\begin{equation*}
    \begin{tikzcd}[column sep=small]
            \pi_j \K^T(X)^{\wedge}_{IO} \arrow[r] \arrow[d]
                & \pi_j \GW^{T,[n+1]}(X)^{\wedge}_{IO} \arrow[r] \arrow[d]
                    & \pi_{j-1} \GW^{T,[n]}(X)^{\wedge}_{IO} \arrow[r] \arrow[d] 
                        & \pi_{j-1} \K^T(X)^{\wedge}_{IO} \arrow[d]\\
            \pi_j \K_{\Bo}(X) \arrow[r]
                & \pi_j \GW^{[n+1]}_{\Bo}(X) \arrow[r]
                    & \pi_{j-1} \GW^{[n]}_{\Bo}(X) \arrow[r]
                        & \pi_{j-1} \K_{\Bo}(X),
    \end{tikzcd}
\end{equation*}
in which the maps $\pi_j \K^T(X)^{\wedge}_{IO} \ra \pi_{j-1}\K_{\Bo}(X)$ and $\pi_{j-1} \K^T(X)^{\wedge}_{IO} \ra \pi_{j-1} \K_{\Bo}(X)$ are isomorphisms by the derived filtration lemma \ref{lemma:derivedfiltrationlemma} and \cite[theorem 1.4]{tabuada21} (cf. \cite[theorem 1.2]{krishna18}), while the map 
\begin{equation*}
    \pi_{j-1} \GW^{T,[n]}(X)^{\wedge}_{IO} \lra \pi_{j-1} \GW^{[n]}_{\Bo}(X)    
\end{equation*}
is an isomorphism by the induction hypothesis.
By the appropriate four lemma, the map $\pi_{j} \GW^{T,[n]}(X)^{\wedge}_{IO} \ra \pi_{j} \GW^{[n]}_{\Bo}(X)$ is surjective for each $n \in \Z$. 
Now we can apply the dual four lemma to 
\begin{equation*}
    \begin{tikzcd}[column sep=tiny]
        \pi_j \GW^{T,[n]}(X)^{\wedge}_{IO} \arrow[r] \arrow[d]
            & \pi_j \K^T(X)^{\wedge}_{IO} \arrow[r] \arrow[d]
                & \pi_j \GW^{T,[n+1]}(X)^{\wedge}_{IO} \arrow[r] \arrow[d]
                    & \pi_{j-1} \GW^{T,[n]}(X)^{\wedge}_{IO} \arrow[d] \\
        \pi_{j} \GW^{[n]}_{\Bo}(X) \arrow[r]
            & \pi_{j} \K_{\Bo}(X) \arrow[r]
                & \pi_{j} \GW^{[n+1]}_{\Bo}(X) \arrow[r]
                    & \pi_{j-1} \GW^{[n]}_{\Bo}(X)
    \end{tikzcd}
\end{equation*}
to conclude that $\pi_{j} \GW^{T,[n+1]}(X)^{\wedge}_{IO} \ra \pi_{j} \GW^{[n+1]}_{\Bo}(X)$ is injective and therefore an isomorphism for each $n \in \Z$.
The desired result follows by induction.
\end{proof}

\printbibliography

@article{atiyah69,
    author    = "Atiyah, M.F. and Segal, G.B.",
    title     = "Equivariant $K$-theory and completion",
    journal   = "J. Differential Geom.",
    volume    = "3",
    pages     = "1-18",
    year      = "1969",
}

@article{blumberg13,
    author = "Blumberg, A.J. and Gepner, D. and Tabuada, G.",
    journal = "Geom. Topol.",
    number = "2",
    pages = "733--838",
    publisher = "MSP",
    title = "A universal characterization of higher algebraic $K\mkern-4mu$–theory",
    volume = "17",
    year = "2013"
}

@misc{calmes20hermitian2,
      title={Hermitian K-theory for stable $\infty$-categories II: Cobordism categories and additivity}, 
      author={Calmès, B. and Dotto, E. and Harpaz, Y. and Hebestreit, F. and Land, M. and Moi, K. and Nardin, D. and Nikolaus, T. and Steimle, W.},
      year={2020},
      eprint={2009.07224},
      archivePrefix={arXiv},
      primaryClass={math.KT}
}

@inproceedings{conrad14,
    author		= "Conrad, B.",
    title		= "Reductive group schemes",
	booktitle	= "Autour des sch{\'e}mas en groupes",
	editor		= "Edixhoven, B.",
	publisher	= "Paris: Société Mathématique de France",
	year		= "2014",
	pages       = "93-444",
	volume      = "1",
}

@article{edidin00,
    author = "Edidin, D. and Graham, W.",
    journal = "Duke Math. J.",
    month = "05",
    number = "3",
    pages = "567--594",
    publisher = "Duke University Press",
    title = "Riemann-Roch for equivariant Chow groups",
    volume = "102",
    year = "2000"
}

@article{krishna18equi,
	year = 2018,
	publisher = {Walter de Gruyter},
	volume = {2018},
	number = {740},
	pages = {275--317},
	author = {Krishna, A.},
	title = {The completion problem for equivariant K-theory},
	journal = {J. reine angew. Math.}
}

@article{krishna18,
    author = "Krishna, A. and Ravi, C.",
    journal = "Ann. K-Theory",
    number = "2",
    pages = "207--233",
    publisher = "MSP",
    title = "Algebraic $K$-theory of quotient stacks",
    volume = "3",
    year = "2018"
}

@book{lurie18,
    author    = "Lurie, J.",
    title     = "Spectral Algebraic Geometry",
    year      = "2018",
}

@phdthesis{rohrbach21,
    author    = "Rohrbach, H.",
    title     = "On Atiyah-Segal completion for Hermitian K-theory",
    school    = "Bergische Universit{\"a}t Wuppertal",
    year      = "2021",
}

@article{rohrbach22,
    author = {Rohrbach, H.},
    title = {The projective bundle formula for Grothendieck-Witt spectra},
    journal = {Journal of Pure and Applied Algebra},
    volume = {226},
    number = {5},
    year = {2022},
}

@article{schlichting17,
    title = "Hermitian K-theory, derived equivalences and Karoubi's fundamental theorem",
    journal = "Journal of Pure and Applied Algebra",
    volume = "221",
    number = "7",
    pages = "1729 - 1844",
    year = "2017",
    author = "Schlichting, M.",
}

@misc{tabuada21,
      title={Motivic Atiyah-Segal completion theorem}, 
      author={Tabuada, G. and Van den Bergh, M.},
      year={2021},
      eprint={2009.08448},
      archivePrefix={arXiv},
      primaryClass={math.AG}
}

@article{zibrowius15,
    author = {Zibrowius, M.},
    title = {Symmetric representation rings are $\lambda$-rings},
    year = {2015},
    journal = {New York J. Math.},
    volume = {21},
    pages = {1055--1092}
}

\end{document}